\documentclass[11pt]{amsart}
\usepackage[margin=1in]{geometry}

\usepackage{amssymb}
\usepackage{amsthm}
\usepackage{amsmath}
\usepackage{mathrsfs}
\usepackage{amsbsy}
\usepackage[all]{xy}
\usepackage{bm}
\usepackage{hyperref}
\usepackage{tikz}
\usepackage{array}
\usepackage{float}
\usepackage{enumerate}
\usepackage{xcolor}
\usepackage{hhline}
\usepackage{mathtools}
\setlength{\parskip}{0em}
\allowdisplaybreaks
\usepackage[noadjust]{cite}

\usepackage{caption}
\usepackage{subcaption}
\usepackage{tabu}
\usepackage{diagbox}
\usepackage{tikz}
\usepackage{bbm}
\usepackage{booktabs}

\usepackage[noabbrev,capitalise]{cleveref}

\newenvironment{enumerate*}%
  {\begin{enumerate}[(I)]%
    \setlength{\itemsep}{10pt}%
    \setlength{\parskip}{0pt}}%
  {\end{enumerate}}

\newtheorem{theorem}{Theorem}[section]
\newtheorem{proposition}[theorem]{Proposition}

\newtheorem{conjecture}[theorem]{Conjecture}

\newtheorem{lemma}[theorem]{Lemma}
\newtheorem{claim}[theorem]{Claim}

\theoremstyle{definition}
\newtheorem{definition}[theorem]{Definition}
\newtheorem{remark}[theorem]{Remark}

\newcommand{\bp}{\mathbf{p}}
\newcommand{\bd}{\mathbf{d}}
\newcommand{\bn}{\mathbf{n}}
\newcommand{\bt}{\mathbf{t}}
\newcommand{\bu}{\mathbf{u}}

\DeclareMathOperator{\Span}{span}
\DeclareMathOperator{\IS}{IS}

\title{Graham's rearrangement conjecture beyond the rectification barrier}

\author[]{Benjamin Bedert}
\address[]{Mathematical Institute, Andrew Wiles Building, University of Oxford, Radcliffe
Observatory Quarter, Woodstock Road, Oxford, OX2 6GG, UK.}
\email{benjamin.bedert@maths.ox.ac.uk}

\author[]{Noah Kravitz}
\address[]{Department of Mathematics, Princeton University, Princeton, NJ 08540, USA}
\email{nkravitz@princeton.edu}

\begin{document}

\begin{abstract}
A 1971 conjecture of Graham (later repeated by Erd\H{o}s and Graham) asserts that every set $A \subseteq \mathbb{F}_p \setminus \{0\}$ has an ordering whose partial sums are all distinct.  We prove this conjecture for sets of size $|A| \leqslant e^{(\log p)^{1/4}}$; our result improves the previous bound of $\log p/\log \log p$.  One ingredient in our argument is a structure theorem involving dissociated sets, which may be of independent interest.
\end{abstract}

\maketitle

\section{Introduction}
\subsection{Main result}
Let $A$ be a finite subset of an abelian group.  We say that an ordering $a_1, \ldots, a_{|A|}$ of $A$ is \emph{valid} if the partial sums $a_1, a_1+a_2, \ldots, a_1+a_2+\cdots+a_{|A|}$ are all distinct.  In 1971, Graham conjectured that every set of non-zero elements of $\mathbb{F}_p$ has a valid ordering.

\begin{conjecture}[\cite{graham}]\label{conj:main}
Let $p$ be a prime.  Then every subset $A \subseteq \mathbb{F}_p \setminus\{0\}$ has a valid ordering.
\end{conjecture}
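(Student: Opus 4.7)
My plan is to combine the probabilistic method with a structural dichotomy based on dissociated sets. The guiding observation is that if $A \subseteq \mathbb{F}_p \setminus \{0\}$ is itself dissociated (no non-trivial signed $\{-1,0,1\}$-subsum of $A$ vanishes), then \emph{every} ordering of $A$ is automatically valid, since a coincidence $s_i = s_j$ of partial sums would force the consecutive block $a_{\sigma(i+1)}+\cdots+a_{\sigma(j)}$ to equal zero, a forbidden $\{0,1\}$-relation on $A$. Thus the real challenge is posed by $A$'s that fail to be dissociated, and I would aim to reduce to that case either by extracting a large dissociated subset $D \subseteq A$ or, when no such $D$ exists, by exploiting the additive structure its absence must force.

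\textbf{Probabilistic framework and use of dissociatedness.} For a uniformly random permutation $\sigma$ of $A$, with partial sums $s_i = \sum_{k \leq i} a_{\sigma(k)}$, let $N_t$ denote the number of $t$-element subsets of $A$ summing to $0$ in $\mathbb{F}_p$. A union bound gives
\[
\Pr[\sigma \text{ invalid}] \leq \sum_{1 \leq i < j \leq n} \Pr[s_i = s_j] = \sum_{t=2}^{n-1} (n-t)\, \frac{N_t}{\binom{n}{t}},
\]
so a valid ordering exists whenever the right-hand side is $< 1$. The task then reduces to bounding $N_t$: if $D \subseteq A$ is dissociated with $|D| \geq c|A|$, then each zero-sum $t$-subset $S$ is determined by $S \cap (A \setminus D)$, since the complementary $D$-part is unique by dissociatedness; this yields $N_t \leq 2^{|A \setminus D|}$, which against the large central binomial coefficients collapses the bulk of the sum. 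The boundary terms (small or near-maximal $t$) need separate treatment, likely via Fourier/character bounds for zero-subset counts, or via the Lov\'asz Local Lemma, since the events ``$s_i=s_j$'' and ``$s_{i'}=s_{j'}$'' are independent whenever the intervals $(i,j]$ and $(i',j']$ are disjoint.

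\textbf{Structural dichotomy and main obstacle.} The heart of the argument should be a structure theorem of the form: either $A$ contains a dissociated subset of size at least $c|A|$, or $A$ is essentially contained in a generalized arithmetic progression of low dimension and small volume. In the first case, the probabilistic argument above closes the proof. In the second case, one Freiman-rectifies $A$ to $\mathbb{Z}$ and orders the integers directly---for instance, by alternating positive and negative elements to keep partial sums confined to an interval of length less than $p$, so that the ordering survives reduction modulo $p$. The main obstacle is producing a dichotomy with quantitative parameters strong enough to reach $|A|\le\exp((\log p)^{1/4})$, well beyond the classical rectification range $\log p/\log\log p$; I expect this to require iterating the dichotomy at multiple scales (recursively extracting dissociated subsets from the ``structured'' pieces of $A$) and carefully managing the trade-off between the two alternatives, with the intermediate regime---where $A$ has a moderate but not overwhelming dissociated subset---being the most delicate.
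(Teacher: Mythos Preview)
The statement you were given is Conjecture~\ref{conj:main}, which the paper does \emph{not} prove; it remains open. The paper proves the weaker Theorem~\ref{thm:main} (the $e^{c(\log p)^{1/4}}$ bound), and your proposal is visibly aimed at that theorem rather than at the conjecture. So as a proof of the stated item there is nothing to compare against.

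Viewed as a plan for Theorem~\ref{thm:main}, the proposal has two genuine gaps. First, your structural dichotomy ``either $A$ contains a dissociated subset of size $\geqslant c|A|$, or $A$ sits in a low-dimensional GAP'' is not available with useful parameters. The maximal dissociated subset of $A$ is guaranteed only to have size $\geqslant \log_3|A|$ (Lemma~\ref{lem:dissociated-set-generator}), and a set with small dimension need have no more GAP structure than what that lemma already gives. The paper's decomposition (Theorem~\ref{thm:structure}) is of a different shape: it iteratively strips off dissociated sets of size $\asymp R$ with $R$ tied to $\log p$ rather than to $|A|$, until the residual set has dimension $<R$ and is therefore rectifiable by Lemma~\ref{lem:small-dim-recitification}. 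The parameter $R\asymp(\log p)^{3/4}$ is what matters, not any proportion of $|A|$.

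Second, a uniformly random permutation with a union bound (or LLL) cannot close. Already the $t=2$ term is the obstruction: if $A$ contains $m$ pairs $\{a,-a\}$ then this term contributes $\asymp m/n$, which is $\Omega(1)$ whenever $m\asymp n$, and character sums give nothing useful for $N_2$. The paper handles this not by a global random ordering but by a hybrid: the rectifiable residual $E=P\cup N$ is ordered \emph{deterministically} via the greedy algorithm of Section~\ref{sec:PN} (Proposition~\ref{prop:ordering-P-N}), and only the dissociated pieces $D_j$ are randomized. Even then the randomization is not uniform: the $D_j$'s are first randomly split and rearranged (Section~\ref{sec:splitting}) to kill most ``Type~I'' intervals, and then ordered with extra constraints on their $K$-element borders (Section~\ref{sec:random}) to kill the short zero-sum intervals that straddle two pieces. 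These border constraints are precisely what replaces your problematic small-$t$ terms, and they require the interplay between Proposition~\ref{prop:ordering-P-N} and Lemmas~\ref{lemm:fewcollisions}--\ref{lemm:permissible} rather than a black-box probabilistic bound.
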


This conjecture also appeared in a 1980 book of Erd\H{o}s and Graham \cite{EG}, and a very similar conjecture for finite cyclic groups is due to Alspach (see \cite{BH}).

The main avenue of attack on Graham's conjecture has been to show that its conclusion holds when $A$ is small.  Until recently, the published world record had established Graham's conjecture for sets $A$ of size at most $12$ (see, e.g., the discussion in \cite{kravitz,CP}).  Earlier this year, the second author \cite{kravitz} used a simple rectification argument to show that Graham's conjecture holds for all sets $A$ of size $|A| \leqslant \log p/\log\log p$; Will Sawin \cite{will} had independently proven a comparable bound, using roughly similar ideas, in a 2015 MathOverflow post.  The purpose of the present paper is to prove Graham's conjecture for sets $A$ of up to quasi-polynomial size.

\begin{theorem}\label{thm:main}
The following holds for every constant $c>0$.  Let $p$ be a large prime.  Then every subset $A \subseteq \mathbb{F}_p \setminus\{0\}$ of size
$$ |A| \leqslant e^{c(\log p)^{1/4}}$$
has a valid ordering.
\end{theorem}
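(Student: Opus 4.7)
My plan is to establish a dichotomy that handles all sets $A$: either $A$ contains a substantial dissociated subset, in which case a random ordering is likely to be valid, or $A$ has controlled additive dimension and can be embedded in $\mathbb{Z}$ via a Freiman isomorphism of sufficiently high order, reducing to an integer problem amenable to the techniques behind the previous bound of $\log p/\log\log p$. A structure theorem interpolating between these two regimes is the new ingredient advertised in the abstract.

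For the dissociated case, suppose $A$ contains a dissociated set $D$ of size $k$. I would sample a uniformly random ordering of $A$ and bound the probability of failure. For each pair $i<j$, the consecutive sub-sum $a_{i+1}+\cdots+a_j$ is determined by the size-$(j-i)$ subset of $A$ placed in positions $i+1,\ldots,j$; after conditioning on how this subset interacts with $A\setminus D$, the question reduces to anti-concentration for random subset sums in $D$. Dissociatedness drives a Littlewood--Offord/Hal\'asz-type bound of order $O(1/\sqrt{k})$ on the probability that any such sub-sum vanishes, and a union bound over the $O(|A|^2)$ consecutive pairs shows that a random ordering is valid with positive probability once $k$ is polynomially larger than $|A|$.

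For the structured case, if no large dissociated subset exists, iterated greedy extraction forces $A$ to lie in the $\{-1,0,1\}$-span of a small subset $B$, whence $A$ is contained in a generalized arithmetic progression of low dimension and controlled volume.  As long as $p$ exceeds this volume, the progression admits a Freiman isomorphism of order $|A|$ into $\mathbb{Z}$, under which the consecutive sub-sums of every ordering of $A$ are faithfully represented. One can then construct a valid ordering of the rectified image in $\mathbb{Z}$ (for instance by adapting \cite{kravitz}, or by exploiting the natural order to steer partial sums away from one another) and pull it back to $A$.

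The main obstacle will be calibrating the two regimes so that together they cover all $A$ up to size $e^{c(\log p)^{1/4}}$. The anti-concentration step needs $k$ to be a polynomial in $|A|$, while rectification requires the ambient GAP volume to stay below $p$, constraining its dimension to something small relative to $\log p$. The structure theorem must therefore deliver either a dissociated subset that is polynomially large in $|A|$ or a hull of sufficiently low dimension; any slack in this tradeoff degrades the final exponent, and the specific value $1/4$ likely reflects an iterated application of the dichotomy or a careful optimisation between the dissociated size and the GAP dimension. Pinning down the sharp quantitative form of the structure theorem, and avoiding logarithmic losses at each iteration, is therefore the heart of the argument.
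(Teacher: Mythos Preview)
Your dichotomy is the right starting intuition but the ``dissociated case'' as you describe it does not work, for two separate reasons. First, a uniformly random ordering of all of $A$ gives no control over sub-intervals that miss the dissociated set $D$: if $a,-a \in A \setminus D$ then they are adjacent with probability $\asymp 1/|A|$, and no amount of anti-concentration coming from $D$ helps with an interval containing zero $D$-elements (or one, or $|D|-1$, or $|D|$). Second, your $O(1/\sqrt{k})$ Littlewood--Offord estimate is the wrong bound and makes the case vacuous in the regime of interest. Dissociatedness gives much more: the sum of a random $m$-subset of a dissociated $k$-set hits each value with probability exactly $\binom{k}{m}^{-1}$, exponentially small in $k$ for $m$ away from $0,k$. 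With only $1/\sqrt{k}$ you would need $k$ polynomial in $|A|$, but dissociated sets in $\mathbb{F}_p$ have size at most $\log_2 p$, so once $|A|$ exceeds $(\log p)^{O(1)}$---exactly where the theorem is new---no such $D$ exists.

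What the paper actually does is not a dichotomy but a hybrid. It iteratively strips off dissociated sets $D_1,\ldots,D_s$ of size $\asymp (\log p)^{3/4}$ until the residual $E$ has small dimension and is therefore rectifiable (this is the structure theorem). It then builds an ordering of the shape $\overline{\bp},\bt_1,\ldots,\bt_u,\bn$ with $P\cup N=E$: the rectified part is ordered deterministically by an adaptation of the integer algorithm from \cite{kravitz}, and only the $D_j$'s are ordered (quasi\nobreakdash-)randomly. Intervals inside the rectified part are handled by the integer argument; intervals reaching well into some $D_j$ get the exponential anti-concentration. The genuinely hard part is the ``border'' intervals straddling the rectified and dissociated regions, or two adjacent $D_j$'s: these require a further random splitting and reshuffling of the $D_j$'s together with a modified integer algorithm that steers the partial sums of $\bp,\bn$ away from prescribed target sets (Proposition~\ref{prop:ordering-P-N}). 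This border analysis, not the structure theorem, is where the exponent $1/4$ is actually determined.
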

We have not made a serious effort to optimize the exponent $1/4$, but the quasi-polynomial shape of this bound does appear as a natural barrier in several parts of our argument.  We also mention that the conclusion of Theorem \ref{thm:main} still holds, with a nearly identical proof, if $\mathbb{F}_p$ is replaced by any abelian group with no non-zero elements of order strictly smaller than $p$.

The proof strategy for Theorem \ref{thm:main} is motivated by the argument in \cite{kravitz}.  (The argument in \cite{will} seems less well-suited to generalization.)  Two new ingredients are the theory of dissociated sets (from additive combinatorics) and probabilistic tools.  One of our intermediate results (see Theorem \ref{thm:structure} below) is a structure theorem involving dissociated sets, which may be of independent interest.

\subsection{Proof sketch and organization}
We say that an ordering of $A$ is \emph{two-sided valid} if no proper nonempty subinterval sums to zero; this condition is slightly stronger than $A$ being valid.  As in \cite{kravitz}, we will prove Theorem \ref{thm:main} with two-sided valid orderings.

Let us briefly recall the main ideas of \cite{kravitz}.  Let $A \subseteq \mathbb{F}_p \setminus \{0\}$ be a subset of size $|A| \leqslant \log p/2\log\log p$.  Using the pigeonhole principle, one can find some $\lambda \in \mathbb{F}_p^\times$ such that the dilate $\lambda \cdot A$ is contained in the interval $(-p/|A|,p/|A|)$.  Since sums of elements of $\lambda \cdot A$ have no ``wrap-around'', we can interpret $\lambda \cdot A$ as a subset of $\mathbb{Z} \setminus \{0\}$; this process is known as ``rectification''.  Finally, in the integer setting, one can use induction on $|A|$ to find a two-sided valid ordering in which all of the positive elements appear before all of the negative elements.

Our proof of Theorem \ref{thm:main} proceeds in four main steps.  The first step is showing that every subset of $\mathbb{F}_p$ can be decomposed into a union of large dissociated sets and a rectifiable residual set.  (A dissociated set is a set all of whose subset sums are distinct; see below.)  The residual set can be broken into ``positive'' and ``negative'' sets.  We will aim to find a two-sided valid ordering consisting of the positive elements, then the elements of the dissociated sets, then the negative elements.

The second step is ordering the positive and negative elements.  Following \cite{kravitz}, we inductively construct these orderings in order to avoid zero-sum intervals that begin in the positive region and end in the negative region.  We take advantage of some flexibility in the argument from \cite{kravitz} in order to prepare for ``potential'' zero-sum intervals with one endpoint in the positive region or negative region and the other endpoint very close to one of the edges of the dissociated region.

The third and fourth steps concern ordering the elements of the dissociated sets.  The main idea is that in a uniformly random ordering of a dissociated set of size $R$, the sum of the first $k$ elements is uniformly distributed on $\binom{R}{k}$ different values.  Since the probability of assuming any particular value is very small, the probability of this initial segment forming the end of a zero-sum interval is also very small.  This na\"ive random strategy essentially works for handling sets $A$ of size up to $(\log p)^{3/2}$ (which breaks the ``rectification barrier'' of \cite{kravitz}), but we must employ a more elaborate random procedure in order to reach the threshold in Theorem \ref{thm:main}.  In particular, it becomes important to distinguish between the ``borders'' and ``interiors'' of the orderings of the dissociated sets.  The third step of the proof is randomly splitting and then reordering the dissociated sets, and the fourth step is choosing a (suitably) random ordering for the elements within each dissociated set.

We carry out these four steps in Sections \ref{sec:structure}, \ref{sec:PN}, \ref{sec:splitting}, and \ref{sec:random}, respectively, and then we make some concluding remarks and pose several open problems in Section \ref{sec:remarks}.

\section{Notation and parameters}
Before jumping into the proofs, we set a few pieces of notation.
\begin{itemize}
    \item We use $\mathbb{F}_p$ to denote the field with $p$ elements, and for us $p$ will always be a large prime.
    \item We denote dilation by $\lambda \cdot A\vcentcolon=\{\lambda a: a \in A\}$.
    \item We denote the restricted sumset by $B\hat{+}B\vcentcolon=\{b+b':b,b'\in B \text{ and } b\neq b'\}$.
    \item Let $\sum_{= M}(S)\vcentcolon=\{\sum_{s\in S'}s:S'\subseteq S,|S'|= M\}$ denote the set of all sums of exactly $M$ elements of $S$.  Likewise, let $\sum_{ \leqslant M}(S)\vcentcolon=\{\sum_{s\in S'}s:S'\subseteq S,|S'|\leqslant M\}$ denote the set of all sums of at most $M$ elements of $S$, and let $\sum_{ \geqslant M}(S)\vcentcolon=\{\sum_{s\in S'}s:S'\subseteq S,|S'|\geqslant M\}$ denote the set of all sums of at least $M$ elements of $S$.
    \item For a sequence $\mathbf{b}=b_1, \ldots, b_r$, let $\IS(\mathbf{b})\vcentcolon=\{b_1+\cdots+b_j: 0 \leqslant j \leqslant r\}$ denote the set of initial segment sums of $\mathbf{b}$, and let $\overline{\mathbf{b}}\vcentcolon=b_r, \ldots, b_1$ denote the reverse of $\mathbf{b}$.
\end{itemize}

We use standard asymptotic notation.  We write $f=O(g)$ or $f \ll g$ if there is a universal constant $C>0$ such that $|f| \leqslant Cg$.  If $f$ is non-negative and $f=O(g)$, then we also write $g=\Omega(f)$.  We write $f \asymp g$ when $f \ll g$ and $g \ll f$. 
 Finally, we write $f(p)=o(g(p))$ if $\lim_{p \to \infty} f(p)/g(p)=0$.

When there is no risk of confusion, we sometimes omit floor functions in calculations for typographical clarity.

Let us also record a few parameters that we will carry through our proofs.
\begin{itemize}
    \item Our set $A$ will have size $|A| \leqslant e^{c(\log p)^{1/4}}$ for some absolute constant $c>0$.
    \item We define the \emph{rectification threshold} for a subset $A\subseteq\mathbb{F}_p$ to be
\begin{align*}
    R=R(A)\vcentcolon=c_1\max\left((\log p)^{1/2},\frac{\log p}{\log |A|}\right),
\end{align*}
where $c_1>0$ is a sufficiently small absolute constant.
    \item The \emph{border width} is $K\vcentcolon=c_2 R^{1/3}$, for yet another absolute constant $c_2>0$.
    \item We will use $s$ (and later $u$) to denote the number of dissociated sets in our decomposition of the set $A$.  The precise values of $s,u$, which are of no importance (besides the trivial bound $s,u \leqslant |A|$), will vary over the course of the proofs.
    \item We will always use $\delta_j$ to denote the sum of the elements of the set $D_j$, and we will always use $\tau_j$ to denote the sum of the elements of the set $T_j$; when applicable, we will also write $\delta:=\sum_j \delta_j$.
\end{itemize}

Finally, we reiterate that a sequence $b_1, \ldots, b_t$ is \emph{two-sided valid} if
$$b_i+\cdots+b_j \neq 0 \quad \text{for all $1 \leqslant i<j \leqslant t$ with $(i,j) \neq (1,t)$}.$$

\section{Structure theorem}\label{sec:structure}
Let $G$ be an abelian group.  A subset $D=\{d_1, \ldots, d_r\} \subseteq G$ is \emph{dissociated} if
$$\epsilon_1 d_1+\cdots +\epsilon_r d_r \neq 0 \quad \text{for all $(\epsilon_1, \ldots, \epsilon_r) \in \{-1,0,1\}^r \setminus \{(0,\ldots, 0)\}$}.$$
Equivalently, $D$ is dissociated if all of the $2^{|D|}$ subset sums of $D$ are distinct. The \emph{dimension} of a subset $B \subseteq G$, written $\dim(B)$, is the size of the largest dissociated set contained in $B$.  One should think of sets of small dimension as being highly ``constrained''.

\begin{lemma}\label{lem:dissociated-set-generator}
Let $B \subseteq G$ be a finite subset of an abelian group.  If $D$ is a maximal dissociated subset of $B$, then
$$B \subseteq \Span(D) \vcentcolon= \left\{\sum_{d\in D}\varepsilon_d d: \varepsilon_d\in\{-1,0,1\}\right\}.$$
\end{lemma}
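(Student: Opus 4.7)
The plan is to prove the contrapositive of the maximality characterization: if $b \in B$ fails to lie in $\Span(D)$, then $D \cup \{b\}$ is again a dissociated subset of $B$, which contradicts the maximality of $D$. So I would start by assuming, toward contradiction, that there exists some $b \in B \setminus \Span(D)$, and I would then show that $D \cup \{b\}$ satisfies the defining property of dissociated sets.

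Concretely, I would take an arbitrary nontrivial signed sum
\[
\varepsilon b + \sum_{d \in D} \varepsilon_d d = 0, \qquad \varepsilon, \varepsilon_d \in \{-1,0,1\},
\]
and split into two cases based on $\varepsilon$. If $\varepsilon = 0$, the relation reduces to $\sum_{d \in D} \varepsilon_d d = 0$ with not all $\varepsilon_d$ zero (since the original sum was nontrivial), which directly contradicts the dissociativity of $D$. If $\varepsilon = \pm 1$, then I can solve for $b$: multiplying through by $-\varepsilon$ (which is legitimate since $\varepsilon \in \{\pm 1\}$ and we only care about membership in $\{-1,0,1\}^D$), I obtain $b = \sum_{d \in D} (-\varepsilon \varepsilon_d) d$, where each coefficient $-\varepsilon \varepsilon_d$ lies in $\{-1,0,1\}$. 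This exhibits $b$ as an element of $\Span(D)$, contradicting the choice of $b$.

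Either case yields a contradiction, so no such $b$ exists, and hence $B \subseteq \Span(D)$. I do not anticipate any real obstacle here; the argument is a direct unpacking of the definitions, and the only minor point to be careful about is ensuring that the coefficients $-\varepsilon \varepsilon_d$ remain in $\{-1,0,1\}$ after rearranging, which is immediate because $\varepsilon \in \{\pm 1\}$.
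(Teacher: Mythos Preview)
Your proof is correct and follows essentially the same approach as the paper: both arguments use the maximality of $D$ to infer that $\{b\} \cup D$ is not dissociated for each $b \in B$, and then rearrange the resulting nontrivial $\{-1,0,1\}$-relation to exhibit $b \in \Span(D)$. Your write-up simply spells out the case analysis that the paper compresses into the word ``rearranging.''
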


\begin{proof}
The maximality of $D$ ensures that for every element $b\in B\setminus D$, the set $\{b\}\cup D$ is not dissociated; rearranging then gives the desired expression for $b$ as an element of $\Span(D)$.
\end{proof}

The following lemma says that sets of sufficiently small dimension can always be ``rectified''. To make this precise, we define for each (nonempty) subset $A\subseteq\mathbb{F}_p$ the parameter
\begin{align}\label{Rdefinition}
    R=R(A)\vcentcolon=c_1\max\left((\log p)^{1/2},\frac{\log p}{\log |A|}\right),
\end{align}
where $c_1$ is a sufficiently small absolute constant. 

\begin{lemma}\label{lem:small-dim-recitification}
If $B \subseteq \mathbb{F}_p$ is a nonempty subset of dimension $\dim(B)< R=R(B)$, then there is some $\lambda \in \mathbb{F}_p^\times$ such that the dilate $\lambda\cdot B$ is contained in the interval $(-\frac{p}{100|B|},\frac{p}{100|B|})$.
\end{lemma}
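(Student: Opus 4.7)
The plan is to combine Lemma \ref{lem:dissociated-set-generator} with a Dirichlet-style simultaneous Diophantine approximation. First, I would let $D=\{d_1,\ldots,d_r\}$ be a maximal dissociated subset of $B$, so that $r=\dim(B)<R$, and use Lemma \ref{lem:dissociated-set-generator} to write every $b\in B$ as $b=\sum_{i=1}^r \varepsilon_i^{(b)} d_i$ with signs $\varepsilon_i^{(b)}\in\{-1,0,1\}$. Any dilate $\lambda\in\mathbb{F}_p^\times$ that shrinks each $\lambda d_i$ will then, by the triangle inequality applied to integer lifts, shrink every $\lambda b$ with only a factor-of-$r$ loss.

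Next I would apply the pigeonhole principle to the map $\mu\mapsto(\mu d_1,\ldots,\mu d_r)\in\mathbb{F}_p^r$: partitioning each coordinate axis (lifted to $(-p/2,p/2]$) into slightly more than $p^{1/r}$ equal intervals produces fewer than $p$ boxes, so two distinct values of $\mu$ must share a box. Their difference is a nonzero $\lambda\in\mathbb{F}_p^\times$ whose integer lifts satisfy $|\lambda d_i|\leqslant T\ll p^{1-1/r}$ for each $i$. Summing $\lambda b=\sum_i\varepsilon_i^{(b)}(\lambda d_i)$ as integers then yields $|\lambda b|\leqslant rT\ll rp^{1-1/r}$ in the integer lift, valid so long as this quantity is below $p/2$.

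The only remaining task is to verify $rT\leqslant p/(100|B|)$, which (after absorbing constants) reduces to $r(\log r+\log|B|)\leqslant\log p$. Since $D\subseteq B$ gives $r\leqslant|B|$ and hence $\log r\leqslant\log|B|$, this simplifies further to $r\log|B|\leqslant\log p/3$, and the analysis then splits on which branch of $R(B)$ is active. When $\log|B|\geqslant\sqrt{\log p}$, one has $R=c_1\sqrt{\log p}$, and the crude bound $|B|\leqslant 3^r$ from Lemma \ref{lem:dissociated-set-generator} forces $\log|B|\leqslant r\log 3<c_1\log 3\cdot\sqrt{\log p}$, giving $r\log|B|<c_1^2\log 3\cdot\log p$. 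When $\log|B|<\sqrt{\log p}$, one has $R=c_1\log p/\log|B|$, which gives $r\log|B|<c_1\log p$ directly. In either regime, choosing $c_1$ sufficiently small yields the desired inequality.

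This is essentially a standard rectification-via-pigeonhole calculation, so there is no serious obstacle. The only subtlety worth emphasizing is that the two branches of $R(B)$ play qualitatively different roles: the $\log p/\log|B|$ branch is used directly against the true size of $B$, while the $\sqrt{\log p}$ branch is used only in combination with the crude estimate $|B|\leqslant 3^r$ from Lemma \ref{lem:dissociated-set-generator}. The square-root shape in $R(B)$ is precisely what equalizes the two regimes so that both cost $O(\log p)$.
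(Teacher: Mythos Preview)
Your proof is correct and follows essentially the same approach as the paper's: take a maximal dissociated subset $D$, invoke Lemma~\ref{lem:dissociated-set-generator}, apply Dirichlet-type pigeonhole to find $\lambda$ making each $\lambda d_i$ of size $\ll p^{1-1/r}$, and then verify $100|B|\dim(B)<p^{1/\dim(B)}$ by splitting on which branch of $R(B)$ is active (using $|B|\leqslant 3^r$ in the $\sqrt{\log p}$ regime). One minor slip: to obtain \emph{fewer} than $p$ boxes you want slightly \emph{fewer} than $p^{1/r}$ intervals per axis, not more---but this does not affect the argument.
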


\begin{proof}
Let $D$ be a maximal dissociated subset of $B$, so that $|D|= \dim(B)$. Lemma~\ref{lem:dissociated-set-generator} tells us that $B\subseteq \operatorname{span}(D)$.  Consider the set
$$\{(\lambda d/p)_{d\in D}: \lambda \in \mathbb{F}_p\} \subseteq (\mathbb{R}/\mathbb{Z})^{\dim(B)}.$$
The pigeonhole principle provides some distinct $\lambda_1, \lambda_2 \in \mathbb{F}_p$ such that $\|\lambda_1 d/p-\lambda_2 d/p\|_{\mathbb{R}/\mathbb{Z}} \leqslant p^{-1/\dim(B)}$ for all $d\in D$.  Set $\lambda\vcentcolon=\lambda_1-\lambda_2 \in \mathbb{F}_p^\times$, so that $\lambda d \in [-p^{1-1/\dim(B)},p^{1-1/\dim(B)}]$ for all $d\in D$.  Since $B\subseteq \operatorname{span}(D)$, we have
$$\lambda \cdot B \subseteq [-\dim(B)p^{1-1/\dim(B)},\dim(B)p^{1-1/\dim(B)}].$$
It remains only to show that $\dim(B)p^{1-1/\dim(B)}<p/(100|B|)$, i.e., that $100|B|\dim(B)<p^{1/\dim(B)}$, as long as $c_1$ is chosen to be sufficiently small.  When $\log |B|<(\log p)^{1/2}$, this inequality follows from $\dim(B)\leqslant R=c_1\log p /\log |B|$. When $\log |B|\geqslant (\log p)^{1/2}$, the desired inequality follows from $\dim(B) \leqslant R=c_1(\log p)^{1/2}$ and $|B| \leqslant 3^{\dim(B)}$.
\end{proof}
\begin{remark}
For applications in this paper, we will always work with sets of size at most $e^{c(\log p)^{1/4}}$, in which case the previous lemma says that every set $B$ of size at most $e^{c(\log p)^{1/4}}$ with $\dim(B)<c_1(\log p)^{3/4}$ is rectifiable.  We opted to prove Lemma \ref{lem:small-dim-recitification} for arbitrary sets $B\subseteq\mathbb{F}_p$, however, so that we could state the structural results in the rest of this section in full generality.  These results are nontrivial for sets $B$ of all sizes since the rectification threshold always satisfies $R(B)\gg(\log p)^{1/2}$. 
\end{remark}

We can combine these two lemmas to obtain a decomposition of \emph{any} subset of $\mathbb{F}_p$ into large dissociated sets and a residual set that (after suitable dilation) is contained in a small interval around $0$.  We shall from now on simply write $R$ for $R(A)$.  The following theorem bears many similarities to an argument of Bourgain \cite{bourgain} from a different context.

\begin{theorem}\label{thm:structure}
Every subset $A \subseteq \mathbb{F}_p$ can be partitioned as
\begin{equation}
    A=D_1 \cup \cdots \cup D_s \cup E,
    \label{structure1}
\end{equation}
where the following holds:
\begin{enumerate}[(i)]
    \item each $D_j$ is a dissociated set of size $|D_j|\asymp R $;
    \item $|E|\geqslant R/2$ if $s>0$;
    \item there is some $\lambda \in \mathbb{F}_p^\times$ such $\lambda \cdot (E \cup \{\delta\}) \subseteq (-\frac{p}{90(|E|+1)},\frac{p}{90(|E|+1)})$, where $\delta\vcentcolon=\sum_{j=1}^s \sum_{d \in D_j} d$ is the sum of all of the elements in the dissociated sets.
\end{enumerate}
\label{structuretheorem1}
\end{theorem}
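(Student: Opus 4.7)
The plan is to produce the decomposition by greedily extracting dissociated subsets of size $\asymp R$ from $A$ until what remains has dimension comfortably below $R$, and then invoking Lemma~\ref{lem:small-dim-recitification} to rectify this remainder together with $\delta$. This is in the spirit of the Bourgain-style iteration the paper alludes to.

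Concretely, I would set $A_0 \vcentcolon= A$ and run the following loop: at stage $i$, if $\dim(A_i)$ lies below a threshold slightly smaller than $R$, halt and set $s \vcentcolon= i$, $E \vcentcolon= A_i$; otherwise choose a dissociated subset $D_{i+1} \subseteq A_i$ and set $A_{i+1} \vcentcolon= A_i \setminus D_{i+1}$. The size of $D_{i+1}$ is selected as a function of $|A_i|$: if $|A_i| \geqslant 3R/2$ I would take $|D_{i+1}| = \lfloor R \rfloor$, while if $|A_i| < 3R/2$ (so that an extraction of the default size would leave too few elements behind) I would take $|D_{i+1}| = |A_i| - \lceil R/2 \rceil$ instead. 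In both regimes the target size lies in $[R/2 - O(1),\lfloor R\rfloor]$ and is at most $\dim(A_i)$, so a dissociated subset of that size exists (as a subset of any maximal dissociated set in $A_i$), giving $|D_{i+1}| \asymp R$ and establishing (i). The loop halts either immediately with $s=0$ (in which case (ii) is vacuous), after a ``large'' extraction leaving $|A_{i+1}| \geqslant R/2$, or after a ``small'' extraction leaving $|A_{i+1}| = \lceil R/2 \rceil$ with $\dim(A_{i+1}) < R$ forcing an immediate halt; in every case $|E| \geqslant R/2$ when $s > 0$, yielding (ii).

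For (iii) I would apply Lemma~\ref{lem:small-dim-recitification} to $B \vcentcolon= E \cup \{\delta\}$. The halting condition gives $\dim(E) < R$ with a one-element safety margin, and adjoining a single element raises dimension by at most one, so $\dim(B) < R \leqslant R(B)$; the last inequality uses that $R(\cdot)$ is non-increasing in set size combined with $|B| \leqslant |A|$ (the edge case $s=0$ with $\delta = 0 \notin A$ is handled by instead applying the lemma to $E$ itself and noting that $0$ already lies in any interval about the origin). The lemma then produces $\lambda \in \mathbb{F}_p^\times$ with $\lambda \cdot B \subseteq (-p/(100|B|), p/(100|B|))$, which sits inside the target window $(-p/(90(|E|+1)), p/(90(|E|+1)))$ since $|B| \in \{|E|, |E|+1\}$ and $|E| \geqslant R/2$ is large for large $p$.

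The main obstacle is the bookkeeping at the final step: one has to arrange $|D_s| \asymp R$ and $|E| \geqslant R/2$ simultaneously while keeping enough slack in $\dim(E)$ to absorb the extra element $\delta$ when invoking rectification. Once these constants are pinned down, the rest of the argument is a straightforward application of Lemma~\ref{lem:small-dim-recitification}.
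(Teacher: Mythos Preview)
Your proposal is correct and takes essentially the same greedy-extraction approach as the paper. The paper's proof is a one-line version of yours: it always removes a dissociated subset of size exactly $R/2$ (rather than $\lfloor R\rfloor$), noting that $\dim(E)\geqslant R$ forces $|E|\geqslant R$ so that the remainder automatically satisfies $|E|\geqslant R/2$---this dispenses with your two-regime case split for the final extraction, and the paper simply absorbs the $+1$ in dimension from adjoining $\delta$ into the choice of the constant $c_1$ rather than building in an explicit safety margin as you do.
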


\begin{proof}
Start with $E=A$. As long as $\dim(E) \geqslant R$, iteratively remove a dissociated subset of size $R/2$, so that at each step the set $E$ of remaining elements has size $|E|\geqslant R/2$.  Once we reach a residual set $E$ of dimension smaller than $R$, Lemma~\ref{lem:small-dim-recitification} (applied to $E \cup \{\delta\}$, where $\delta$ is the sum of all of the dissociated elements removed) provides the desired $\lambda \in \mathbb{F}_p^\times$.
\end{proof}

We will, of course, apply this theorem to the set $A$ for which we are trying to find a two-sided valid ordering. 
If the number $s$ of dissociated sets happens to be $0$, then the entire set $A$ is rectifiable and therefore has a two-sided valid ordering by \cite{kravitz} (see the discussion in the proof sketch).  Thus, we will restrict our attention to the case where $s\geqslant 1$ (so in particular $|E| \gg R$ from (ii)).  The presence of a large dissociated set allows us to obtain a more detailed structural result.

\begin{proposition}\label{prop:refined-structure}
For every nonempty subset $A \subseteq \mathbb{F}_p \setminus \{0\}$, there is some $\lambda \in \mathbb{F}_p^\times$ such that $\lambda \cdot A$ can be partitioned as
     $$\lambda\cdot A = P\cup N\cup(\cup_{j=1}^{s} D_j),$$
    where
\begin{enumerate}[(i)]
    \item the ``positive'' set $P$ is contained in $(0,\frac{p}{4|P\cup N|})$, the ``negative set'' $N$ is contained in $(-\frac{p}{4|P\cup N|},0)$, and the element $\delta\vcentcolon=\sum_{j=1}^{s} \sum_{d \in D_j} d$ is contained in $(-\frac{p}{4},\frac{p}{4})$;
\end{enumerate}
and the following also holds if $s>0$:
\begin{enumerate}[(i)]
    \item[(ii)] $P \cup N$ is nonempty, and each $D_j$ is a dissociated set of size $|D_j|\asymp R$, where the implied constant is absolute;
    \item[(iii)] $\delta \notin \{0\}\cup -P\cup -N$, and moreover $\delta \neq -\sum_{p \in P}p$ if $N$ is nonempty and $\delta \neq -\sum_{n \in N}n$ if $P$ is nonempty;
    \item[(iv)] $D_1 \cup D_s \cup \{\delta\}$ is a dissociated set;
    \item [(v)] $|D_1|=|D_s|$.
\end{enumerate}
 \end{proposition}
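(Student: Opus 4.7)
The plan is to apply Theorem~\ref{thm:structure} to obtain the coarse decomposition and then make further careful choices to enforce the refined conditions (iii)--(v). First, I apply Theorem~\ref{thm:structure} to $A$ to obtain a dilation $\lambda \in \mathbb{F}_p^\times$ and a partition $A = D_1 \cup \cdots \cup D_s \cup E$; identifying $\mathbb{F}_p$ with $(-p/2, p/2]$, I let $P$ and $N$ denote the positive and negative elements of $\lambda \cdot E$. Since $0 \notin \lambda A$, these partition $\lambda E$, so $|P \cup N| = |E|$. Condition (i) then follows from the interval containment in Theorem~\ref{thm:structure}(iii), with a factor-of-${\sim}22$ slack between $p/90$ and $p/4$. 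Condition (ii) follows immediately from the theorem together with $|E| \geqslant R/2 > 0$. Inspecting the theorem's proof, the extracted dissociated subsets can all be taken to have size exactly $\lfloor R/2 \rfloor$, making (v) automatic.

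To handle (iv) structurally, I would modify the initial step of the extraction: first pull out a single dissociated subset $D^* \subseteq A$ of size $\asymp R$ (possible since $\dim A \geqslant R$), and declare $D_1$ and $D_s$ to be two equal-size slices of $D^*$, so that $D_1 \cup D_s \subseteq D^*$ is automatically dissociated. To avoid the degenerate case $s = 2$ -- in which $\delta_{\mathrm{mid}} \vcentcolon= \delta - (\delta_1 + \delta_s) = 0$ and the span condition $\delta \notin \Span(D_1 \cup D_s)$ fails automatically, since $\delta_1 + \delta_s \in \Span(D_1 \cup D_s)$ -- I would reserve a third equal slice of $D^*$ as a middle set $D_2$, forcing $s \geqslant 3$. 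What then remains are the arithmetic constraints on $\delta$: it must avoid $\Span(D_1 \cup D_s)$ (a set of at most $3^R$ values) together with the at most $|A| + 3$ specific values forbidden by (iii). The total forbidden set for $\delta$ thus has size at most $3^R + |A| + O(1) = p^{o(1)}$, since $R \ll (\log p)^{3/4}$ and $|A| \leqslant e^{c(\log p)^{1/4}}$.

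To dodge this forbidden set, I would exploit the freedom in choosing the remaining middle dissociated subsets $D_3, \ldots, D_{s-1}$ in the subsequent extraction steps: different choices shift $\delta_{\mathrm{mid}}$ by different amounts, and a pigeonhole or probabilistic argument over these choices should produce a configuration placing $\delta$ outside the forbidden set.

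The main obstacle lies in this last arithmetic step: quantifying how much freedom we have in shifting $\delta_{\mathrm{mid}}$ while preserving all the other structural properties. A direct perturbation strategy -- swapping an element of a middle $D_j$ with an element of $E$ -- shifts $\delta$ by a controllable amount but may destroy the interval containment for $E$, since the swapped-in element of $D_j$ could be large in absolute value modulo $p$. Thus the crux is either to locate ``safe'' swaps involving small-modulus elements of some $D_j$, or to show directly that the many possible extraction histories realize enough distinct values of $\delta_{\mathrm{mid}}$ to miss the forbidden set of size $p^{o(1)}$.
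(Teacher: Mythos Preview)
Your setup through conditions (i), (ii), and (v) is fine and matches the paper's opening moves. The genuine gap is exactly where you flag it: you have no mechanism for steering $\delta$ away from the forbidden set, and the ``many extraction histories'' idea is not made to work. Worse, by insisting that $D_1 \cup D_s \cup \{\delta\}$ be dissociated via the condition $\delta \notin \Span(D_1 \cup D_s)$, you have inflated the forbidden set to size $3^{\Theta(R)}$, which is enormously larger than anything you can hope to dodge by varying the middle extractions. Your swap idea fails for the reason you state: elements of the $D_j$ need not be small modulo $p$, so moving them into $E$ can destroy the interval containment.

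The paper resolves both issues with a single device you are missing: \emph{absorb elements of $P$ (or $N$) into the dissociated sets}, rather than the reverse. Since every element of $P \cup N$ lies in $(-p/(90|E|), p/(90|E|))$, absorbing a bounded number of them shifts $\delta$ by at most $O(p/|E|)$, which the slack between $p/90$ and $p/4$ easily accommodates; and Lemma~\ref{absorptionlemma} guarantees that after splitting $D_1$ into enough pieces, each absorbed element can be added to some piece while keeping it dissociated. For condition~(iii), the forbidden set $\{0,-\sum_{n \in N} n\} \cup -P \cup -N$ has size at most $|P|+|N|+2$, and the integer sumset bound $|P \hat{+} P| \geqslant 2|P|-3$ (after arranging $|P| > |N|+5$) produces distinct $p_1,p_2 \in P$ with $\delta + p_1 + p_2$ outside this set; absorbing $p_1,p_2$ into $D_1$ then fixes (iii). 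For condition~(iv), the paper does \emph{not} try to steer $\delta$ away from $\Span(D_1 \cup D_s)$ at all: instead, once $\delta \neq 0$ is fixed, Lemma~\ref{absorptionlemma} applied to a bipartition of $D_1$ yields a half $D_1^{(1)}$ with $D_1^{(1)} \cup \{\delta\}$ dissociated, and two equal-size halves of $D_1^{(1)}$ become the new $D_1$ and $D_s$. This reduces the forbidden set for $\delta$ from $3^{\Theta(R)}$ values to $|P|+|N|+O(1)$ values, which is what makes the argument close.
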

Before proving this proposition, we make a simple but powerful observation about absorbing elements into dissociated sets.

\begin{lemma}\label{absorptionlemma}
Let $G$ be an abelian group, and let $D_1 \cup D_2$ be a partition of a dissociated subset of $G$.  For every element $x \in G \setminus \{0\}$, either $D_1 \cup \{x\}$ or $D_2 \cup \{x\}$ is dissociated.
\end{lemma}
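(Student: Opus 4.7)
The plan is to argue by contradiction. Suppose that neither $D_1\cup\{x\}$ nor $D_2\cup\{x\}$ is dissociated. Then there exist non-trivial $\{-1,0,1\}$-linear combinations
$$\epsilon_1 x+\sum_{d\in D_1}\epsilon_d d=0 \qquad \text{and}\qquad \epsilon_2 x+\sum_{d\in D_2}\epsilon'_d d=0$$
in which not all coefficients vanish. First I would observe that $\epsilon_1\neq 0$: the set $D_1$, being a subset of the dissociated set $D_1\cup D_2$, is itself dissociated, so if $\epsilon_1=0$ then the first relation would force $\epsilon_d=0$ for all $d\in D_1$, making it trivial. By the same token $\epsilon_2\neq 0$, and so $\epsilon_1,\epsilon_2\in\{-1,+1\}$.

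The next step is to combine the two relations in whichever way cancels $x$: subtract them if $\epsilon_1=\epsilon_2$, and add them if $\epsilon_1=-\epsilon_2$. In either case, the result is a $\{-1,0,1\}$-linear combination of elements of $D_1\cup D_2$ summing to $0$, since each coefficient on some $d\in D_j$ is either $\epsilon_d$, $\epsilon'_d$, $-\epsilon_d$, or $-\epsilon'_d$. Because $D_1\cup D_2$ is dissociated, all of these coefficients must vanish, i.e., $\epsilon_d=0$ for every $d\in D_1$ and $\epsilon'_d=0$ for every $d\in D_2$. Substituting back into the first original relation yields $\epsilon_1 x=0$, and since $\epsilon_1=\pm 1$ this gives $x=0$, contradicting the hypothesis $x\in G\setminus\{0\}$.

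There is really no serious obstacle here; the argument is just a careful case split on the signs of the $x$-coefficients. The only point that deserves attention is remembering that $D_1$ and $D_2$ individually inherit dissociation from $D_1\cup D_2$, which is what rules out the degenerate case $\epsilon_1=0$ or $\epsilon_2=0$ and ensures that the combined relation on $D_1\cup D_2$ really does force $x=0$.
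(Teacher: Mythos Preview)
Your proof is correct and is essentially the same argument as the paper's, just written out at the level of coefficients rather than using the $\Span$ notation: the paper observes that failure of dissociation forces $x\in\Span(D_1)\cap\Span(D_2)$, which is $\{0\}$ since $D_1\cup D_2$ is dissociated, and your combination of the two relations is exactly the unpacking of this implication.
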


\begin{proof}
Assume for the sake of contradiction that neither $D_1 \cup \{x\}$ nor $D_2 \cup \{x\}$ is dissociated.  Since $D_1$ is dissociated, the failure of $D_1 \cup \{x\}$ to be dissociated implies that $x\in \Span(D_1)$; similarly, $x\in \Span(D_2)$. So $\Span(D_1)\cap\Span(D_2)$ contains a non-zero element, contradicting the assumption that $D_1 \cup D_2$ is dissociated.
\end{proof}

Iterating this observation, we find that if $B$ is a set of size $t$ and $D_1 \cup \cdots \cup D_{t+1}$ is a partition of a dissociated set, then it is always possible to add the elements of $B$ to the dissociated sets $D_j$ in such a way that the sets remain dissociated.

We will also make use of the trivial lower bound for the size of a restricted sumset in $\mathbb{Z}$: If $B \subseteq \mathbb{Z}$ is a finite set, then $|B\hat{+}B|\geqslant 2|B|-3$.  We are now ready to prove Proposition \ref{prop:refined-structure}.  The choice of numerical constants appearing in the proof is not important.
 
 \begin{proof}[Proof of Proposition \ref{prop:refined-structure}.]
     To start, Theorem \ref{structuretheorem1} provides some $\lambda \in \mathbb{F}_p^\times$ and a decomposition $$\lambda \cdot A=D_1 \cup \cdots \cup D_{s} \cup E,$$ where each $D_j$ is a dissociated set of size $\asymp R$ and we have $E \cup \{\delta\} \subseteq (-\frac{p}{90(|E|+1)},\frac{p}{90(|E|+1)})$, for $\delta\vcentcolon=\sum_{j=1}^{s} \sum_{d \in D_j} d$.  Set $P\vcentcolon=E\cap (0,p/4|E|)$ and $N\vcentcolon=E\cap(-p/4|E|,0)$.  If $s=0$, then we have already obtained the desired decomposition of $\lambda \cdot A$, so for the remainder of the proof we assume that $s \geqslant 1$.  By replacing $\lambda$ with $-\lambda$ if necessary, we may assume that $|P|\geqslant |N|$.  In particular, since $|E|\gg R$, this implies that $|P|\gg R$.    

     We remark that once we have a decomposition satisfying conditions (i)--(iii), we can modify the decomposition to satisfy (iv) and (v) as follows.  Split $D_1$ into $2$ parts $D_1^{(1)},D_1^{(2)}$ each of size $\asymp R$.  Lemma \ref{absorptionlemma} ensures that either $D_1^{(1)} \cup \{\delta\}$ or $D_1^{(2)} \cup \{\delta\}$ is dissociated; without loss of generality, assume that $D_1^{(1)} \cup \{\delta\}$ is dissociated.  Then further split $D_1^{(1)}$ into $2$ parts $D_1^{(3)}, D_1^{(4)}$ each of size $\lfloor |D_1^{(1)}|/2 \rfloor \asymp R$, add the leftover element of $D_1^{(1)}$ to $D_1^{(2)}$ if $|D_1^{(1)}|$ was odd, and replace the sequence of sets $D_1, \ldots, D_s$ by the sequence $D_1^{(3)}, D_1^{(2)}, D_2, D_3, \ldots, D_s, D_1^{(4)}$.  This new sequence satisfies (iv) and (v).  The remainder of the proof is devoted to finding a decomposition satisfying conditions (i)--(iii).
     
     We will later apply sumset inequalities involving $P,N$, and we will need $P,N$ to be not-too-small so that we have ``room'' for sumsets to expand.  In anticipation of this, we begin by reducing to the case where $N$ is either empty or of size at least $10$.  Suppose that $0<|N|<10$.  Note that $\sum_{n \in N}n \in (-p/90,p/90)$.  Split $D_1$ into $|N|+1 \leqslant 10$ sets each of size $\asymp R$; the remark before the proof ensures that we can absorb all of the elements of $N$ into these dissociated sets, and this procedure changes the value of $\delta$ by at most $p/90$.  Notice that each newly formed $D_j$ still has size $\asymp R$, and that we still have $P \subseteq (0,\frac{p}{40|P \cup N|})$, $N \subseteq (-\frac{p}{40|P \cup N|},0)$, and $\delta \in (-p/40,p/40)$.
     
We now consider two cases depending on the size of $N$. First, suppose that $N=\emptyset$, and recall that $|P|\gg R$.
Since $P$ is rectifiable (i.e., Freiman-isomorphic to a subset of $\mathbb{Z}$), the trivial lower bound for restricted sumsets in integers gives $$|P\hat{+}P|\geqslant 2|P|-3>|P|+2,$$ and hence we can find distinct $p_1,p_2\in P$ such that $p_1+p_2+\delta\notin \{0,-\sum_{n\in N}n\}\cup -P$.  Splitting $D_1$ and absorbing $p_1,p_2$ with the help of Lemma \ref{absorptionlemma} as above yields a new decomposition of $A$ where the sum of all of the elements in the dissociated sets is $p_1+p_2+\delta$ and hence conditions (ii) and (iii) are satisfied. This procedure also changes the value of $\delta$ by at most $p/90$ (say), so we obtain the desired decomposition of $\lambda \cdot A$.
        
Finally, suppose that $|N|\geqslant 10$, and recall that we also have $|P| \geqslant |N| \geqslant 10$.  By absorbing $5$ arbitrary elements of $N$ into $D_1$, we may assume that $|P| \geqslant |N|-5$.  Since we still have $|N|>1$, there is some $n_1 \in N$ such that $\delta+n_1 \neq -\sum_{p \in P}p$; as above, we absorb $n_1$ into $D_1$, so that the final part of condition (iii) is satisfied.  Now we have
$$|P \hat + P| \geqslant 2|P|-3>|P|+|N|+2,$$
so there are distinct $p_1,p_2 \in P$ such that $$\delta+p_1+p_2 \notin \{0,-\sum_{n \in N}n\} \cup -P \cup -N.$$
Absorbing $p_1,p_2$ into $D_1$ gives the desired decomposition of $\lambda \cdot A$.  (For the final part of condition (iii), note that this last step preserves the property $\delta+\sum_{p \in P}p \neq 0$.) 
\end{proof}

\section{Ordering $P$ and $N$}\label{sec:PN}
With Proposition \ref{prop:refined-structure} in hand, we can say a bit more about the remainder of the proof of Theorem \ref{thm:main}.  We will aim to find orderings $\bp$ of $P$, $\bn$ of $N$, and $\bd$ of $\cup_j D_j$ such that $\overline{\bp}, \bd, \bn$ is a two-sided valid ordering of $A$.  Of course, we will need each of the three orderings to be two-sided valid on its own, and we will need to avoid creating zero-sum intervals when we concatenate them.

Condition (i) from Proposition \ref{prop:refined-structure} means that the problem of constructing $\bp$ and $\bn$ naturally lives in the integers rather than in $\mathbb{F}_p$, as follows.  Identify $\delta$ and the elements of $P \cup N$ with elements of $(-p/4,p/4) \subseteq \mathbb{Z}$ in the natural way, and note that sums of these elements can be computed equivalently in $\mathbb{F}_p$ and in $(-p/2,p/2) \subseteq \mathbb{Z}$ because the sums in $\mathbb{F}_p$ do not exhibit any wrap-around.  Likewise, for any ordering $\bd$ of $\cup_j D_j$, we can identify $\IS(\bd)$ and $\IS(\overline{\bd})$ with subsets of $(-p/2,p/2) \subseteq \mathbb{Z}$.  Now we observe that the ordering $\overline \bp,\bd,\bn$ is two-sided valid if and only if the ordering $\overline\bp,\delta, \bn$ is two-sided valid in the integers, the ordering $\bd$ is two-sided valid in $\mathbb{F}_p$, and $\IS(\bp) \cap -\IS(\bd)=\IS(\bn) \cap -\IS(\overline{\bd})=\emptyset$; the key point is that the first condition lives entirely in the integers, the second condition does not concern $\bp$ and $\bn$, and the third condition lives in the integers for each fixed choice of $\bd$.

We will later choose $\bd$ randomly, but it turns out that we can model $-\IS(\bd),-\IS(\overline{\bd})$ by somewhat larger deterministic sets that encode all of the ``potentially important'' intersections with $\IS(\bp),\IS(\bn)$ (respectively); it will suffice to ensure that $\IS(\bp), \IS(\bn)$ have fairly small intersections with these deterministic sets.  With this in mind, the main result of this section is as follows.



\begin{proposition}\label{prop:ordering-P-N}
Let $P \subseteq (0,\infty)$ and $N \subseteq (-\infty,0)$ be finite sets of integers, and let $\delta>0$ be a positive integer not contained in $-N$; moreover, assume that $ \delta \neq -\sum_{n \in N}n$ if $P \neq \emptyset$.  Let $Y^+_1, \ldots, Y^+_m,Y^-_1, \ldots, Y^-_m \subseteq \mathbb{Z}$ be finite sets.  Then there are orderings $\bp$ of $P$ and $\bn$ of $N$ such that $\overline \bp, \delta, \bn$ is two-sided valid and we have
\begin{align}\label{ISp-bound}
    |\IS(\bp) \cap Y^+_j| \leqslant \inf_{L \in \mathbb{N}} \left(\frac{|Y^+_j|}{L}+L+4+4\sum_{i=1}^{j-1}|Y^+_i|\right)
\end{align}
and 
\begin{align}\label{ISn-bound}
    |\IS(\bn) \cap Y^-_j| \leqslant \inf_{L \in \mathbb{N}} \left(\frac{|Y^-_j|}{L}+L+4+4\sum_{i=1}^{j-1}|Y^-_i|\right)
\end{align}
for all $1 \leqslant j \leqslant m$.
\end{proposition}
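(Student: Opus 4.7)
The plan is to prove this proposition by an inductive construction on $|P|+|N|$, extending the rectification argument of \cite{kravitz} while carefully tracking the intersections with the avoidance sets $Y^{\pm}_j$.

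First, I would parse the two-sided validity condition. Since $P\subseteq(0,\infty)$, $\delta>0$, and $N\subseteq(-\infty,0)$, every subinterval of $\overline{\bp},\delta,\bn$ whose elements have a common sign, or whose sign is forced by adjacency to $\delta$ on the positive side, automatically has non-zero sum. Writing $s^+_i:=p_1+\cdots+p_i$ and $s^-_j:=n_1+\cdots+n_j$, the non-trivial constraints reduce to two families:
\begin{itemize}
    \item (I) $\delta+s^-_j\neq 0$ for $1\leqslant j\leqslant|N|$ (subintervals starting at $\delta$), and
    \item (II) $s^+_i+\delta+s^-_j\neq 0$ for $(i,j)\in\{1,\ldots,|P|\}\times\{1,\ldots,|N|\}$ with $(i,j)\neq(|P|,|N|)$ (subintervals straddling $\delta$).
\end{itemize}
The hypotheses $\delta\notin -N$ and $\delta\neq-\sum_{n\in N}n$ (when $P\neq\emptyset$) are exactly what is needed to make (I) and (II) simultaneously satisfiable.

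Next, I would build $\bn$ and $\bp$ by a peeling procedure starting from the elements adjacent to $\delta$, mirroring the inductive structure of \cite{kravitz}. At each step we peel off either the next element of $\bp$ (placed just to the left of $\delta$ in $\overline{\bp},\delta,\bn$) or the next element of $\bn$ (placed just to the right of $\delta$); each such placement determines exactly one new initial-segment sum. The number of remaining candidates at step $k$ is $|P|+|N|-k+1$, and since the elements are distinct integers, each candidate produces a distinct candidate partial sum. Among these we must first rule out those that would violate the strict constraints (I) and (II); this blacklists a bounded number of values, leaving us ample room to also steer the resulting partial sum away from the $Y^{\pm}_j$'s.

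To achieve the $\inf_L$ bound, I would process the $Y^+_j$'s (and independently the $Y^-_j$'s) in the given priority order. Fix $L$. The idea is a two-scale argument: partition the ongoing construction of $\bp$ into phases corresponding to blocks of the range of partial sums, each containing on average $|Y^+_j|/L$ elements of $Y^+_j$. Within each block, the inductive flexibility allows us to avoid $Y^+_j$ except for at most one unavoidable hit, contributing the $L$ term, while the unavoidable-in-every-block mass contributes $|Y^+_j|/L$. The $+4$ absorbs boundary artifacts (empty $P$ or $N$, the $(|P|,|N|)$ exception, and off-by-one adjustments). The additive correction $4\sum_{i<j}|Y^+_i|$ reflects the cost of handling previous priority levels: reshuffling made to suppress hits in $Y^+_i$ can cost up to $O(|Y^+_i|)$ new hits in $Y^+_j$. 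A symmetric argument handles $\bn$ and the $Y^-_j$'s.

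The main obstacle is the joint combinatorial bookkeeping: at every inductive step, one must verify that the number of forbidden and undesirable partial sums is strictly less than the number of remaining candidates, so the greedy never gets stuck. The precise constants in the bound---especially the leading $L+|Y^+_j|/L$ trade-off and the coefficient $4$ on the $\sum_{i<j}|Y^+_i|$ slack---should emerge from this accounting, but confirming them requires a delicate multi-stage induction that simultaneously respects the strict constraints (I) and (II) and all of the budgeted $Y^{\pm}_j$ bounds. I expect essentially all of the genuine work of the proof to lie here.
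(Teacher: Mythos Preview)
Your skeleton is right: a greedy peel following \cite{kravitz}, with constraints (I) and (II) correctly isolated. But two substantive pieces are missing, and your proposal as written would not close.

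First, the peel is not a free choice between $P$ and $N$ at each step, and the number of candidates is not $|P|+|N|-k+1$. In the paper (and in \cite{kravitz}) the \emph{sign} of the remaining sum $\pi_k+\delta+\nu_\ell$ dictates which side you must peel from; this is exactly what makes the two-sided validity induction go through (any interval reaching the newly placed element has a sign forced by this inequality). If you allow yourself to choose the side, you lose this guarantee. Also, the paper peels from the outside in (largest index first), not from the element adjacent to $\delta$; either direction can be made to work, but the sign rule is not optional.

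Second, and more importantly, your ``blocks of the range of partial sums'' heuristic is not the mechanism that produces the $|Y_j|/L+L$ bound, and I do not see how to make it produce one. The actual idea hinges on two facts you do not mention: the partial sums $\nu_\ell$ are \emph{strictly monotone} (since all elements of $N$ have the same sign), and when every allowable choice lands in $Y^-_j$ one should pick the \emph{extremal} element. That single choice then ``skips over'' at least $\ell-2$ other elements of $Y^-_j$ (they lie strictly between $\nu_{\ell+1}$ and $\nu_\ell$ and, by monotonicity, can never occur as later partial sums). Hence each such hit with $\ell\geqslant L+1$ consumes at least $L-1$ elements of $Y^-_j$, giving $|Y^-_j|/L$ such hits, plus trivially $L+O(1)$ hits from small $\ell$. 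The $4\sum_{i<j}|Y^-_i|$ term is not a ``reshuffling cost''; it comes from a priority rule (try to avoid all $Y$'s; failing that, handle the smallest index $i$ that is hit) together with a separate counting argument showing there are at most $4|Y^-_i|$ total $i$-steps. Without the monotonicity-plus-extremal-choice mechanism and the priority scheme, the bookkeeping you flag as ``the genuine work'' does not have a route to the stated constants.
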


In \cite{kravitz}, the first author presented a simple algorithm for inductively constructing a two-sided valid ordering of any finite subset of $\mathbb{Z} \setminus \{0\}$.  Let us quickly review this algorithm since it forms the basis for the proof of Proposition \ref{prop:ordering-P-N}.  Suppose $P \subseteq (0,\infty)$ and $N \subseteq (-\infty,0)$ are finite sets of integers; we want to produce orderings $\bp$ of $P$ and $\bn$ of $N$ such that $\overline{\bp},\bn$ is two-sided valid.  We will construct the sequences $\bp=p_1, \ldots, p_{|P|}$ and $\bn=n_1, \ldots, n_{|N|}$ from the larger indices to the smaller indices.  For the first step, consider the sign of $\sum_{n \in N}n+\sum_{p \in P}p$.  Suppose that this sum is non-negative; we will choose the value of $p_{|P|}$ as follows.  There is some $p^* \in P$ such that $\sum_{n \in N}n+\sum_{p \in P \setminus \{p^*\}} p \neq 0$, and we choose this $p^*$ to be our $p_{|P|}$.  This choice ensures that if $\overline{\bp'},\bn$ is a two-sided valid ordering of the remaining elements $P \setminus \{p_{|P|}\}, N$, then $p_{|P|},\overline{\bp'},\bn$ is the desired two-sided valid ordering of $P,N$: Any interval containing $p_{|P|}$ and not containing any of $\bn$ clearly has strictly positive sum; any proper interval containing both $p_{|P|}$ and some elements of $\bn$ must contain all of $P$ and hence has strictly positive sum by our assumption that $\sum_{n \in N}n+\sum_{p \in P}p \geqslant 0$; the intervals strictly contained in $\overline{\bp'},\bn$ are all non-zero-sum by assumption; and the interval consisting of all of $\overline{\bp'},\bn$ has non-zero sum by our choice of $p^*$.  If instead $\sum_{n \in N}n+\sum_{p \in P}p<0$, then we choose $n_{|N|}$ analogously.  With this first step complete, we throw out the already-chosen element $p_{|P|}$ or $n_{|N|}$ and repeat this process with the remaining elements.  This procedure produces the desired orderings $\bp,\bn$.

The above algorithm has a lot of slack, in the sense that at each step there are many possible choices.  To exploit this slack and prove Proposition \ref{prop:ordering-P-N}, we will employ a modified algorithm that greedily avoids partial sums of $\bp$ lying in the $Y^+_j$'s and partial sums of $\bn$ lying in the $Y^-_j$'s.

\begin{proof}[Proof of Proposition \ref{prop:ordering-P-N}]
We will construct the sequences $\bp=p_1, \ldots, p_{|P|}$ and $\bn=n_1, \ldots, n_{|N|}$ from the larger indices to the smaller indices.  Suppose that we have already chosen the values of $p_{|P|},p_{|P|-1}, \ldots, p_{k+1}$ and $n_{|N|}, n_{|N|-1}, \ldots, n_{\ell+1}$.  At the next step, we will choose the value of either $p_k$ or $n_\ell$ depending on the sign of the sum of all of the remaining elements.  Let $$P_k\vcentcolon=P \setminus \{p_{|P|}, \ldots, p_{k+1}\} \quad \text{and} \quad N_\ell\vcentcolon=N \setminus \{n_{|N|}, \ldots, n_{\ell+1}\}$$ be the sets of remaining elements of $P$ and $N$, and define the quantities
$$\pi_k\vcentcolon=\sum_{p \in P_k}p  \quad \text{and} \quad \nu_\ell\vcentcolon=\sum_{n \in N_\ell}n.$$
As in the algorithm from \cite{kravitz}, we will succeed in constructing a two-sided valid ordering as long as $p_k,n_\ell$
avoid a few particular potential values (for more details, see Claim \ref{claim:valid} below).  If we are choosing $p_k$, then we want $p_k$ not to be equal to $\pi_k+\delta+\nu_\ell$, since this choice of $p_k$ would lead to a zero-sum interval $p_{k-1}+\cdots+p_1+\delta+n_1+\cdots+n_\ell=0$.  Likewise, if we are choosing $n_\ell$, then we want $n_\ell$ not to be equal to either $\delta+\nu_\ell$ or $\pi_k+\delta+\nu_\ell$, since these choices of $n_\ell$ would lead to zero-sum intervals $\delta+n_1+\cdots+n_{\ell-1}=0$ and $p_k+\cdots+p_1+\delta+n_1+\cdots+n_{\ell-1}=0$.  With this in mind, we define the sets
$$P'_k\vcentcolon=P_k \setminus \{\pi_k+\delta+\nu_\ell\} \quad \text{and} \quad N'_\ell\vcentcolon=N_\ell \setminus \{\delta+\nu_\ell,\pi_k+\delta+\nu_\ell\}$$
of ``allowable'' choices for $p_k$ and $n_\ell$.  Due to the assumptions in Proposition \ref{prop:ordering-P-N}, it will always transpire that the set $P'_k$ or $N'_\ell$ under consideration is nonempty.

Again as in the algorithm from \cite{kravitz}, the sign of the quantity $\pi_k+\delta+\nu_\ell$ will determine whether we choose the value of $p_k$ or the value of $n_\ell$ next:\footnote{The apparent asymmetry in the cases arises from the assumption in Proposition \ref{prop:ordering-P-N} that $\delta>0$.}

\begin{enumerate}
    \item Suppose that $\pi_k+\delta+\nu_\ell \geqslant 0$ and $k>0$.  Then we will choose $p_k\in P'_k$ as follows. If there is some $p^* \in P'_k$ such that $\pi_k-p^* \notin \cup_j Y^+_j$, then choose $p_k$ to be this $p^*$ and say that the current step is a \emph{skip-step for $P$}.
    
    Now, consider the case where $\pi_k-P'_k \subseteq \cup_j Y^+_j$.  Let $i$ be minimal such that $\pi_k-P'_k$ intersects $Y^+_i$, and say that the current step is an \emph{$i$-step for $P$}.  If $\pi_k-P'_k\subseteq Y^+_i$, then let $p_k$ be the largest element of $P'_k$.  If $\pi_k-P'_k \not\subseteq Y^+_i$, then let $p_k$ be the largest $p^* \in P'_k$ such that $\pi_k-p^* \notin Y^+_i$.

    \item Suppose that $\pi_k+\delta+\nu_\ell \geqslant 0$, $k=0$ (i.e., we have already chosen all of $\bp$), and $\ell>0$, or that $\pi_k+\delta+\nu_\ell< 0$ and $\ell>0$.  Then we will choose $n_\ell \in N'_\ell$ as follows.  If there is some $n^* \in N'_\ell$ such that $\nu_\ell-n^* \notin \cup_j Y^-_j$, then choose $n_\ell$ to be this $n^*$ and say that the current step is a \emph{skip-step for $N$}.
    
    Now, consider the case where $\nu_\ell-N'_\ell \subseteq \cup_j Y^-_j$.  Let $i$ be minimal such that $\nu_\ell-N'_\ell$ intersects $Y^-_i$, and say that the current step is an \emph{$i$-step for $N$}.  If $\nu_\ell-N'_\ell \subseteq Y^-_i$, then let $n_\ell$ be the smallest (i.e., most negative) element of $N'_\ell$.  If $\nu_\ell-N'_\ell \not\subseteq Y^-_i$, then let $n_\ell$ be the smallest (i.e., most negative) $n^* \in N'_\ell$ such that $\nu_\ell-n^* \notin Y^-_i$.
\end{enumerate}

We begin with $(k,\ell)=(|P|,|N|)$ and run the above procedure until we reach $(k,\ell)=(0,0)$.  To establish Proposition \ref{prop:ordering-P-N}, we must show three things: that the algorithm actually runs and produces orderings $\bp$ of $P$ and $\bn$ of $N$; that the resulting ordering $\bp,\delta, \overline{\bn}$ is two-sided valid; and that $\IS(\bp)$ and $\IS(\bn)$ have small intersections with the $Y^+_j$'s and $Y^-_j$'s (respectively).

\begin{claim}
The above algorithm runs all the way to $(k,\ell)=(0,0)$ and produces orderings $\bp$ of $P$ and $\bn$ of $N$.   
\end{claim}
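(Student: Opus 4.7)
The plan is to verify that the algorithm never gets stuck, meaning the ``allowable'' set from which the next element is drawn ($P'_k$ in Case~(1) or $N'_\ell$ in Case~(2)) is always nonempty. Since each step strictly decreases $k+\ell$ by one, this immediately implies termination at $(k,\ell)=(0,0)$ and the production of full orderings $\bp$ of $P$ and $\bn$ of $N$. The verification reduces in each case to a small-cardinality check, since $|P_k \setminus P'_k| \leqslant 1$ and $|N_\ell \setminus N'_\ell| \leqslant 2$.

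For Case~(1), a failure requires $|P_k| = 1$, say $P_k = \{p\}$. Then $\pi_k = p$, and the failure condition collapses to $\delta+\nu_\ell = 0$. I would rule this out in three subcases. If $\ell = 0$, then $\nu_\ell = 0$ forces $\delta = 0$, contradicting $\delta > 0$. If $N_\ell = N$ (no element of $N$ selected yet), then $\delta = -\sum_{n \in N} n$ contradicts the standing hypothesis, which applies because $P \neq \emptyset$. Otherwise, let $n_{\ell+1}$ be the most recently selected element of $N$; since it was picked from $N'_{\ell+1}$, in particular $n_{\ell+1} \neq \delta + \nu_{\ell+1}$, and hence $\delta + \nu_\ell = \delta + \nu_{\ell+1} - n_{\ell+1} \neq 0$. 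Any intervening Case~(1) steps leave $\nu_\ell$ unchanged, so this invariant persists.

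For Case~(2), failure requires $|N_\ell| \leqslant 2$. If $|N_\ell| = 1$ with $N_\ell = \{n\}$, the candidate equations $n = \delta + n$ and $n = \pi_k + \delta + n$ force $\delta = 0$ or $\pi_k + \delta = 0$ respectively, both impossible since $\delta > 0$ and $\pi_k \geqslant 0$. The delicate subcase is $|N_\ell| = 2$ with $k > 0$: here failure requires $N_\ell = \{\delta+\nu_\ell,\pi_k+\delta+\nu_\ell\}$, and combining this with $n_1 + n_2 = \nu_\ell$ yields $\nu_\ell = -2\delta - \pi_k$, whence one of the two elements of $N_\ell$ must equal $-\delta$. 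This contradicts the standing hypothesis $\delta \notin -N$.

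The main obstacle is this final subcase of Case~(2), which requires solving a small linear system in order to expose the hidden element $-\delta$. Every other potential failure dissolves transparently against the standing hypotheses on $\delta$ or against the invariant $\delta + \nu_\ell \neq 0$ that the Case~(2) choices preserve.
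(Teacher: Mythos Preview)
Your argument is correct and follows essentially the same approach as the paper: reduce to showing that $P'_k$ and $N'_\ell$ are nonempty in the small-cardinality cases, and dispatch each such case using the standing hypotheses $\delta>0$, $\delta\notin -N$, and $\delta\neq -\sum_{n\in N}n$. Your linear-system treatment of the $|N_\ell|=2$, $k>0$ case is equivalent to the paper's direct observation that $\delta+\nu_\ell\notin N_\ell$ (since $\delta+n_1+n_2=n_i$ would force $-\delta=n_j\in N$).

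One small omission: you handle $|N_\ell|=2$ only for $k>0$, but Case~(2) also arises with $k=0$ (once $P$ has been exhausted and $\delta+\nu_\ell\geqslant 0$). There the two forbidden values $\delta+\nu_\ell$ and $\pi_0+\delta+\nu_\ell$ coincide, so $|N_\ell\setminus N'_\ell|\leqslant 1$ and $N'_\ell$ is automatically nonempty; a single sentence would close this. You also leave implicit the check that every state $(k,\ell)\neq(0,0)$ actually falls into Case~(1) or Case~(2), which the paper verifies explicitly (the point being that $\pi_k+\delta+\nu_\ell<0$ forces $\ell>0$ since $\pi_k\geqslant 0$ and $\delta>0$). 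Both are easy to fill in.
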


\begin{proof}
Note that as long as $(k,\ell) \neq (0,0)$, we fall into one of the two cases.  Indeed, when $\pi_k+\delta+\nu_\ell \geqslant 0$, we fall into case (1) or case (2) according to whether $k>1$ or $k=0$.  When $\pi_k+\delta+\nu_\ell < 0$, we must have $\ell>0$ since $\delta>0$ and $\pi_k \geqslant 0$, so we fall into case (2).

It remains to show that the sets $P'_k, N'_\ell$ are always nonempty when needed.  First, consider case (1).  It is clear that $P'_k \neq \emptyset$ as long as $k>1$.  When $k=1$, the set $P_1$ consists of a single element $p_1$, and we have $\pi_1=p_1$.  We must show that $\pi_1+\delta+\nu_\ell \neq p_1$, i.e., that $\delta \neq -\nu_\ell$.  When $\ell=|N|$, this is precisely the assumption in Proposition \ref{prop:ordering-P-N} that $\delta \neq -\sum_{n \in N}n$.  For $\ell<|N|$, recall that $n_{\ell+1}$ was chosen to be an element of $N'_{\ell+1}$, which by construction does not contain $\delta+\nu_{\ell+1}$.  It follows that
$\nu_\ell=\nu_{\ell+1}-n_{\ell+1} \neq \nu_{\ell+1}-(\delta+\nu_{\ell+1})=-\delta$, as desired.

Now, consider case (2).  We begin with the subcase where $\pi_k+\delta+\nu_\ell \geqslant 0$ and $k=0$.  Note that $\pi_0=0$ and hence $\pi_0+\delta+\nu_\ell=\delta+\nu_\ell$.  It is clear that $N'_\ell \neq \emptyset$ as long as $\ell>1$.  When $\ell=1$, the set $N_1$ consists of a single element $n_1$, and we have $\nu_1=n_1$.  The assumption $\delta>0$ ensures that $\delta+\nu_1=\delta+n_1 \neq n_1$, so $N'_1 \neq \emptyset$.

Finally, we treat the subcase where $\pi_k+\delta+\nu_\ell<0$.  It is clear that $N'_\ell \neq \emptyset$ as long as $\ell>2$.  When $\ell=2$, the set $N_2$ consists of two elements $n_1,n_2$, and we have $\nu_2=n_1+n_2$.  Then $\delta+\nu_2=\delta+n_1+n_2 \notin N_2$ by the assumption in Proposition \ref{prop:ordering-P-N} that $\delta\notin -N$ so neither of $n_1,n_2$ is equal to $-\delta$, and it follows that $N'_2 \neq \emptyset$.  When $\ell=1$, the set $N_1$ consists of a single element $n_1$, and we have $\nu_1=n_1$.  Then $\delta+\nu_1=\delta+n_1 \neq n_1$ since $\delta>0$, and $\pi_k+\delta+\nu_1=\pi_k+\delta+n_1 \neq n_1$ since $\pi_k+\delta>0$.  Thus $N'_1 \neq \emptyset$, and this concludes the proof.
\end{proof}

\begin{claim}\label{claim:valid}
The ordering $\overline \bp,\delta, \bn$ is two-sided valid.
\end{claim}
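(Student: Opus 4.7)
The plan is to show that no proper subinterval of $\overline\bp,\delta,\bn$ of length at least two has sum zero. Setting $S_a\vcentcolon=p_1+\cdots+p_a$ and $T_d\vcentcolon=n_1+\cdots+n_d$, so that $\pi_k=S_k$ and $\nu_\ell=T_\ell$ in the notation of the algorithm, the possible forms of such a subinterval are: contained in $\overline\bp$, with sum $S_a-S_b$; contained in $\bn$, with sum $T_d-T_c$; or containing $\delta$, with sum $S_a+\delta+T_d$ for some $(a,d)\neq(|P|,|N|)$. The first two families are handled immediately: the positivity of $P$ (resp.\ negativity of $N$) forces $S_0<S_1<\cdots<S_{|P|}$ (resp.\ $T_0>T_1>\cdots>T_{|N|}$), so all such differences are non-zero.

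The heart of the proof is showing $S_a+\delta+T_d\neq 0$ for every $(a,d)\neq(|P|,|N|)$. I would view the algorithm as tracing a monotone lattice path from $(|P|,|N|)$ down to $(0,0)$ in the $(k,\ell)$-grid, each step decrementing one coordinate according to the sign of the current sum $\pi_k+\delta+\nu_\ell=S_k+\delta+T_\ell$. The exclusions in cases (1) and (2) precisely ensure that the sum $S_{k'}+\delta+T_{\ell'}$ at the new state is non-zero; consequently every visited state except possibly the initial one satisfies $S_k+\delta+T_\ell\neq 0$. It remains to handle unvisited lattice points $(a,d)$.

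To do this, for each column $a$ let $[\ell_2(a),\ell_1(a)]$ denote the contiguous range of $\ell$-values visited by the path in column $a$; since the path makes exactly $|P|$ westward steps, each column is visited at least once, so this range is well-defined, and by construction the column-transition identity $\ell_2(a+1)=\ell_1(a)$ holds for $0\leqslant a<|P|$. Given $(a,d)\neq(|P|,|N|)$, I would split into three cases. If $d\in[\ell_2(a),\ell_1(a)]$, then $(a,d)$ is visited and the exclusion argument above gives $S_a+\delta+T_d\neq 0$. If $d<\ell_2(a)$, then at $(a,\ell_2(a))$ the algorithm exits column $a$ via a case (1) step, forcing $S_a+\delta+T_{\ell_2(a)}\geqslant 0$; combined with $T_d>T_{\ell_2(a)}$, this yields $S_a+\delta+T_d>0$. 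If $d>\ell_1(a)$, I would let $a^*$ be the minimal $a'>a$ with $d\leqslant\ell_1(a')$; this exists because $\ell_1(|P|)=|N|\geqslant d$, and the column-transition identity gives $\ell_2(a^*)=\ell_1(a^*-1)<d$, so at $(a^*,d)$ the algorithm performs a case (2) step. Since $a^*\geqslant 1$, this forces the sum at $(a^*,d)$ to be strictly negative, and then $S_{a^*}>S_a$ yields $S_a+\delta+T_d<0$.

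The main obstacle is the case $d>\ell_1(a)$: the argument hinges on the recursive identification of $a^*$ via the column-transition identity and on the observation that $a^*\geqslant 1$ is essential --- were $a^*$ allowed to be $0$, we would fall into the ``$k=0$'' sub-case of case (2), where a case (2) step can be taken even when the sum is $\geqslant 0$ and hence no sign information would follow. Once this path-structure bookkeeping is settled, the non-vanishing of $S_a+\delta+T_d$ follows from the strict monotonicity of $S_\bullet$ and $T_\bullet$.
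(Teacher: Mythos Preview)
Your proof is correct and follows essentially the same strategy as the paper: rule out intervals inside $\overline{\bp}$ or $\bn$ by monotonicity, and for intervals through $\delta$ use the exclusions in $P'_k,N'_\ell$ at states the algorithm actually visits together with a sign argument at states it does not. Your column-range bookkeeping $[\ell_2(a),\ell_1(a)]$ and the choice of $a^*$ in case~3 are exactly the lattice-path reformulation of the paper's ``earliest step $(k^*,\ell^*)$ with $k^*\leqslant k$, $\ell^*\leqslant \ell$'' argument and its companion observation that $n_\ell$ was chosen at some $(k',\ell)$ with $k'>k$; the only organizational difference is that you fold the $a=0$ case (the paper's separate treatment of sums $\delta+\nu_\ell$) into the same three-case framework, which is a mild streamlining but not a new idea.
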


\begin{proof}
Since any zero-sum interval must contain both positive and negative numbers, we can restrict our attention to intervals of the form $p_k+\cdots+p_1+\delta+n_1+\cdots+n_\ell$ (with sum $\pi_k+\delta+\nu_\ell$) and $\delta+n_1+\cdots+n_\ell$ (with sum $\delta+\nu_\ell$).

Let us first consider the sums $\pi_k+\delta+\nu_\ell$.  Note that we do not need to worry about $(k,\ell)=(|P|,|N|)$, since the corresponding interval is the entire sequence $\overline \bp,\delta,\bn$, so we may assume that either $k<|P|$ or $\ell<|N|$.  Let $(k^*,\ell^*)$ be the earliest step in the algorithm where $k^* \leqslant k$ and $\ell^* \leqslant \ell$.  Then the previous step in the algorithm was either $(k^*+1,\ell^*)$ or $(k^*, \ell^*+1)$; without loss of generality assume that it was the former, since the argument for the latter is identical.  Then $k^*=k$ and $\ell^* \leqslant \ell$.  If $\ell^*=\ell$, then $$\pi_k+\delta+\nu_\ell=(\pi_{k+1}-p_{k+1})+\delta+\nu_\ell$$ is nonzero because we chose $p_{k+1}\in P'_{k+1}$ and the set $P'_{k+1}$ does not contain $\pi_{k+1}+\delta+\nu_\ell$.  If instead $\ell^*<\ell$, then there is some $k'>k$ such that $n_\ell$ was chosen at step $(k',\ell)$. 
 It follows that
$$\pi_k+\delta+\nu_\ell<\pi_{k'}+\delta+\nu_\ell<0,$$
so $\pi_k+\delta+\nu_\ell$ is nonzero, as desired.

Let us now consider the sums $\delta+\nu_\ell$.  We can again quickly dispose of the case $\ell=|N|$.  Indeed, if $P=\emptyset$, then the corresponding interval is the entire sequence $\overline\bp,\delta,\bn$, and if $P \neq \emptyset$, then $\delta \neq -\sum_{n \in N}n=-\nu_{|N|}$ by assumption.  So we may assume that $\ell<|N|$, and we conclude by noting that $\delta+\nu_\ell=\delta+(\nu_{\ell+1}-n_{\ell+1}) \neq 0$ since $N'_{\ell+1}$ does not contain $\delta+\nu_{\ell+1}$.
\end{proof}

\begin{claim}
The orderings $\bp$ and $\bn$ satisfy
$$|\IS(\bp) \cap Y^+_j| \leqslant \inf_{L \in \mathbb{N}} \left(\frac{|Y^+_j|}{L}+L+2+4\sum_{i=1}^{j-1}|Y^+_i|\right)$$ and $$|\IS(\bn) \cap Y^-_j| \leqslant \inf_{L \in \mathbb{N}} \left(\frac{|Y^-_j|}{L}+L+2+4\sum_{i=1}^{j-1}|Y^-_i|\right)$$
for all $1 \leqslant j \leqslant m$.
\end{claim}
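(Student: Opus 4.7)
The plan is to bound $|\IS(\bp)\cap Y^+_j|$ by classifying each partial sum according to the type of step that produced it, and then to refine the bound on the dominant contribution via a disjoint-shifts argument.

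First, identify each $\pi_{k-1}\in\IS(\bp)\setminus\{\pi_{|P|}\}$ with the step $k$ at which it was created (by our choice of $p_k$). By construction, skip-steps satisfy $\pi_{k-1}\notin\bigcup_i Y^+_i$, so they contribute nothing to $|\IS(\bp)\cap Y^+_j|$; on any $i$-step with $i>j$ we have $\pi_k-P'_k\cap Y^+_j=\emptyset$ by minimality of the hit index, so $\pi_{k-1}\notin Y^+_j$; and on a good $j$-step we chose $p_k$ greedily so that $\pi_{k-1}\notin Y^+_j$. Hence the contributions to $|\IS(\bp)\cap Y^+_j|$ come from bad $j$-steps (which automatically have $\pi_{k-1}\in Y^+_j$), from $i$-steps with $i<j$ (which may or may not land in $Y^+_j$), and from the two endpoints $\pi_0=0$ and $\pi_{|P|}$, the latter contributing at most $2$.

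The main ingredient is bounding the number $r$ of bad $j$-steps by $|Y^+_j|/L+L$ for every $L\in\mathbb{N}$. Enumerate the bad $j$-steps by $p$-index as $k^{(1)}>k^{(2)}>\cdots>k^{(r)}$, and set $D_t:=\pi_{k^{(t)}}-\pi_{k^{(t+1)}}$. Since at every bad $j$-step we chose $p_k=\max P'_k$, we have $D_t\geqslant p_{k^{(t)}}=\max P'_{k^{(t)}}$, and therefore the shifted sets $Q_t:=\bigl(\sum_{s<t}D_s\bigr)+P'_{k^{(t)}}$ are pairwise disjoint in $\mathbb{Z}$ (since $\max Q_t\leqslant \sum_{s\leqslant t}D_s<\min Q_{t+1}$). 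On the other hand, $\pi_{k^{(t)}}-P'_{k^{(t)}}\subseteq Y^+_j$ at every bad $j$-step translates, after shifting by $\pi_{k^{(1)}}-\pi_{k^{(t)}}=\sum_{s<t}D_s$, to $Q_t\subseteq \pi_{k^{(1)}}-Y^+_j$. Combining, $\sum_{t=1}^{r}|P'_{k^{(t)}}|\leqslant|Y^+_j|$. Splitting the bad $j$-steps by whether $|P'_{k^{(t)}}|\geqslant L$ gives at most $|Y^+_j|/L$ of the former type and at most $L$ of the latter (using $|P'_{k^{(t)}}|\geqslant k^{(t)}-1$ and that the $k^{(t)}$'s are distinct positive integers), so $r\leqslant |Y^+_j|/L+L$.

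For the $i$-steps with $i<j$, each such step produces a distinct $\pi_{k-1}$: on a bad $i$-step we have $\pi_{k-1}\in Y^+_i\subseteq\bigcup_{i'<j}Y^+_{i'}$, so the bad low steps contribute at most $\sum_{i<j}|Y^+_i|$. For good low steps, one applies a variant of the disjoint-shifts argument to the witness set $A_{k,i}:=\{p^{**}\in P'_k:\pi_k-p^{**}\in Y^+_i\}$, whose elements certify the step as an $i$-step; this yields an $O(|Y^+_i|)$ bound for each fixed $i<j$ and hence $O(\sum_{i<j}|Y^+_i|)$ overall, with the absolute constant $4$ absorbing the difference between the "pick the maximum" invariant used for bad $j$-steps and the "pick the largest allowed" rule in the good $i$-step case. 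Adding the endpoint contribution of at most $2$ gives the claimed bound. The analogous bound for $|\IS(\bn)\cap Y^-_j|$ follows by symmetry, since the algorithm for $\bn$ mirrors that for $\bp$ up to sign. The main obstacle is the disjoint-shifts argument: one must verify that the "pick the maximum" rule enforces $D_t\geqslant\max P'_{k^{(t)}}$ so that the successive shifted blocks $Q_t$ remain separated, and check that interleaved $n$-choosing steps (which do not alter $\pi_k$ for the $p$-indices) do not interfere with the disjointness.
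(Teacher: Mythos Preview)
Your overall strategy matches the paper's: classify each $\pi_{k-1}$ by the type of the step that produced it, observe that only bad $j$-steps and $i$-steps with $i<j$ can contribute to $\IS(\bp)\cap Y^+_j$, and bound these two contributions separately. Your disjoint-shifts argument for bad $j$-steps is correct and is essentially a repackaging of the paper's version, which phrases the same idea as ``the $\geqslant k-2$ skipped elements of $Y^+_j$ lying strictly between consecutive partial sums can never reappear in $\IS(\bp)$''.

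There is, however, a genuine gap in your treatment of \emph{good} $i$-steps for $i<j$. You write that ``one applies a variant of the disjoint-shifts argument to the witness set $A_{k,i}$'' and that the constant $4$ ``absorbs the difference'', but you do not specify the variant, and the mechanism does not carry over. The disjoint-shifts argument for bad $j$-steps relies on the fact that $p_k=\max P'_k$, which forces $D_t\geqslant\max P'_{k^{(t)}}$ and hence separates the blocks $Q_t$. At a good $i$-step, by contrast, $p_k$ is only the largest element of $P'_k\setminus A_{k,i}$, and this can be strictly smaller than $\max A_{k,i}$; the shift then need not exceed the spread of the witness set, so successive shifted copies of the $A_{k,i}$ can overlap inside $Y^+_i$, and no pigeonhole bound follows.

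The paper bounds the total number of $i$-steps (good and bad together) by $4|Y^+_i|$ via a different device. To each $i$-step $k$ it assigns the extremal witness $y(k)$, the smallest element of $(\pi_k-P'_k)\cap Y^+_i$, and shows that each $y\in Y^+_i$ can occur as $y(k)$ for at most four values of $k$. When $y(k)$ is not the overall minimum of $\pi_k-P'_k$, the greedy rule forces $\pi_{k-1}<y(k)$, so every later witness is strictly smaller and $y(k)$ can recur at most once in this way; when $y(k)$ is the minimum of $\pi_k-P'_k$, it is among the few smallest elements of $\pi_k-P_k$ (since $|P_k\setminus P'_k|$ is bounded), and each of these positional minima is strictly monotone in $k$, bounding the remaining multiplicity. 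This witness-tracking argument, not a shift argument, is what is needed to close your gap.
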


\begin{proof}
We will prove the statement only for $|\IS(\bn) \cap Y^-_j|$ since the argument for $|\IS(\bp) \cap Y^+_j|$ is essentially identical.  Recall that $\IS(\bn)=\{\nu_\ell=\sum_{i=1}^\ell n_i: 0 \leqslant \ell \leqslant |N|\}$.  For $0 \leqslant \ell<|N|$, write $\nu_\ell=\nu_{\ell+1}-n_{\ell+1}$.  This quantity can lie in $Y^-_j$ only when the choice of $n_{\ell+1}$ is a $j$-step or an $i$-step for some $i<j$.  We will bound these two contributions separately.  Note that skip-steps and $i$-steps for $i>j$ never contribute.

We first consider the contribution of $j$-steps.  Notice that the partial sums $\nu_\ell$ are strictly increasing (becoming less negative) as $\ell$ decreases.  Suppose that the choice of $n_{\ell+1}$ is a $j$-step and $\nu_\ell=\nu_{\ell+1}-n_{\ell+1} \in Y^-_j$.  Then we must have $\nu_{\ell+1}-N'_{\ell+1} \subseteq Y^-_j$.  Since $n_{\ell+1}$ is the smallest (most negative) element of $N'_{\ell+1}$, the other $|N'_{\ell+1} \setminus \{n_{\ell+1}\}| \geqslant \ell+1-3=\ell-2$ elements of $\nu_{\ell+1}-N'_{\ell+1}\subseteq Y^-_j$ lie in the interval $(\nu_{\ell+1},\nu_\ell)$; it follows that these elements are ``skipped'' and can never appear in $\IS(\bn)$.  In particular, from such $j$-steps with $\ell\geqslant L+1$ we obtain at most $|Y^-_j|/L$ elements of $\IS(\bn) \cap Y^-_j$.  From $j$-steps with $\ell \leqslant L$ we trivially obtain at most $L+1$ elements of $\IS(\bn) \cap Y^-_j$.

We now consider the contribution of $i$-steps with $i<j$.  We will trivially bound this contribution by the total number of $i$-steps with $i<j$.  We claim that the number of $i$-steps is at most $4|Y^-_i|$ for each $i$.  For each $i$-step $\ell$, let $y(\ell)$ denote the largest (least negative) element of $(\nu_\ell-N'_\ell) \cap Y^-_i$.  It suffices to show that each $y \in Y^-_i$ appears as $y(\ell)$ for at most $4$ different $i$-steps $\ell$.  If $y(\ell)$ is not the largest element of $\nu_\ell-N'_\ell$, then it is distinct from $y(\ell')$ for all $\ell'<\ell$ since $$\nu_{\ell'} \geqslant\nu_{\ell-1}=\nu_\ell-n_\ell>y(\ell)$$ by the definition of an $i$-step.  If $y(\ell)$ is the largest element of $\nu_\ell-N'_\ell$, then it is one of the three largest elements of $\nu_\ell-N_\ell$.  Notice that the largest element of $\nu_\ell-N_\ell$ is strictly increasing as $\ell$ decreases, the second-largest element of $\nu_\ell-N_\ell$ is strictly increasing as $\ell$ decreases, and the third-largest element of $\nu_\ell-N_\ell$ is strictly increasing as $\ell$ decreases; it follows that each $y$ can appear at most three times as one of the three largest elements of $\nu_\ell-N_\ell$.  Thus we have shown that each $i$-step $\ell$ in the algorithm is associated with some number $y(\ell)\in Y^-_i$ and moreover that any given $y \in Y^-_i$ appears as $y(\ell)$ for at most $4$ different $i$-steps $\ell$, so we conclude that the total number of $i$-steps is at most $4|Y^-_i|$.  This establishes the claim.

Combining these contributions (and adding $1$ for $\nu_{|N|}$) gives the desired upper bound.\footnote{To bound the intersection between $\IS(\bp)$ and $Y^+_j$, one simply interchanges ``smaller'' and ``larger'' throughout the proof.  Since we have $|P_k \setminus P'_k| \leq 1$ instead of $|N_\ell \setminus N'_\ell| \leq 2$, we could replace $\ell-2$ with $k-1$ in the second paragraph and replace $4|Y_j^-|$ with $3|Y_j^+|$ in the third paragraph to obtain even a slightly tighter bound.}
\end{proof}

These three claims together imply Proposition \ref{prop:ordering-P-N}.
\end{proof}

\section{Splitting the dissociated sets}
\label{sec:splitting}
In this section we manipulate the dissociated sets $D_j$ in order to make their sums suitably generic; this will avoid ``bad'' scenarios in the random orderings of the $D_j$'s that we will consider in the next section. 
Recall that if $D$ is a dissociated set, then all of the subset sums of $D$ are distinct. In particular, if we choose a uniformly random partition of $D$ into parts $D^{(1)}, D^{(2)},D^{(3)},D^{(4)}$ of equal size (up to rounding), 
then (omitting floor functions) for each $1 \leqslant i \leqslant 4$ the $\binom{|D|}{|D|/4}$ possible values of $\sum_{d \in D^{(i)}}d$ are all achieved with equal probability; likewise, each of the quantities $\sum_{d \in D^{(1)} \cup D^{(2)}}d,\sum_{d \in D^{(2)} \cup D^{(3)}}d,\sum_{d \in D^{(3)} \cup D^{(4)}}$ is uniformly distributed on $\binom{|D|}{|D|/2}$ possible values, and each of the quantities $\sum_{d \in D^{(1)} \cup D^{(2)} \cup D^{(3)}}d,\sum_{d \in D^{(2)} \cup D^{(3)} \cup D^{(4)}}d$ is uniformly distributed on $\binom{|D|}{3|D|/4}$ possible values.  Since $\binom{|D|}{|D|/4},\binom{|D|}{|D|/2},\binom{|D|}{3|D|/4}$ are all $e^{\Omega(|D|)}$, we obtain very strong anti-concentration for the sums under consideration.  We record this simple but important fact in the following lemma.

\begin{lemma}\label{splittinglemma}
   Let $D\subset G$ be a dissociated set, and let $D=D^{(1)} \cup D^{(2)} \cup D^{(3)} \cup D^{(4)}$ be a uniformly random partition of $D$ into four sets of equal size (up to rounding).  Then for every nonempty proper interval $I \subseteq [4]$ and every $x\in G$, we have $$\mathbb{P}\left(\sum_{i \in I}\sum_{d \in D^{(i)}}d=x\right)\leqslant e^{-\Omega(|D|)}.$$
\end{lemma}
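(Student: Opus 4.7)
The plan is to reduce the claim to a uniform distribution argument on subsets of $D$, which is essentially what the paragraph preceding the lemma already spells out. First I would observe that a uniformly random partition of $D$ into four equal-sized parts can be described by a uniformly random function $f\colon D \to [4]$ subject to $|f^{-1}(i)| = |D|/4$ for each $i$ (ignoring floor issues). By symmetry of this distribution under permutations of $D$, for any subset $I \subseteq [4]$ the set $S_I := \bigcup_{i \in I} D^{(i)} = f^{-1}(I)$ is uniformly distributed over the $\binom{|D|}{|I| \cdot |D|/4}$ subsets of $D$ of size $|I| \cdot |D|/4$.

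The next step is to invoke the dissociated hypothesis. Since $D$ is dissociated, all $2^{|D|}$ subset sums of $D$ are distinct, so in particular the map $S \mapsto \sum_{d \in S} d$ is injective on the collection of size-$|I| \cdot |D|/4$ subsets of $D$. Consequently, $\sum_{i \in I}\sum_{d \in D^{(i)}}d = \sum_{d \in S_I}d$ is uniformly distributed over a set of exactly $\binom{|D|}{|I| \cdot |D|/4}$ distinct group elements, and so for every $x \in G$,
\[
\mathbb{P}\!\left(\sum_{i \in I}\sum_{d \in D^{(i)}}d = x\right) \;\leqslant\; \binom{|D|}{|I| \cdot |D|/4}^{-1}.
\]

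Finally, because $I$ is a nonempty proper subset of $[4]$, we have $|I| \in \{1,2,3\}$, so $|I| \cdot |D|/4 \in \{|D|/4, |D|/2, 3|D|/4\}$. In every one of these three cases Stirling's approximation (or the entropy bound $\binom{n}{\alpha n} \geqslant 2^{H(\alpha)n}/(n+1)$ with $\alpha \in \{1/4,1/2,3/4\}$) yields $\binom{|D|}{|I| \cdot |D|/4} \geqslant e^{\Omega(|D|)}$, which gives the desired $e^{-\Omega(|D|)}$ upper bound on the probability.

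There is essentially no obstacle here: the only minor technical point is the floor-function handling when $|D|$ is not divisible by $4$, but this can be absorbed into the implied constants in $\Omega(\cdot)$ by either rounding the part sizes or by slightly shrinking $D$. The only conceptual input is the two-line observation that the marginal distribution of $S_I$ is uniform on equal-sized subsets, which is immediate from the symmetry of the uniform random equipartition.
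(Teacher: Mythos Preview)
Your proposal is correct and follows exactly the approach the paper takes: the paper's ``proof'' is the paragraph preceding the lemma, which observes that $\bigcup_{i\in I}D^{(i)}$ is uniformly distributed over size-$|I|\cdot|D|/4$ subsets of $D$, that dissociatedness makes the corresponding subset sums distinct, and that the relevant binomial coefficients are all $e^{\Omega(|D|)}$. Your write-up simply formalizes this same argument (and in fact proves the slightly stronger statement for all nonempty proper subsets $I\subseteq[4]$, not just intervals).
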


Let $D_1,\dots,D_{s}$ be the dissociated sets appearing in the structural decomposition of $A$ from Proposition \ref{prop:refined-structure}.  We will split and reorder these dissociated sets as follows.  For each $j\in[1,s]$, we partition $D_j=\cup_{i=1}^4 D_j^{(i)}$ into four sets of equal size uniformly at random as in Lemma \ref{splittinglemma}, and we require that $|D_1^{(1)}|=|D_s^{(4)}|$.  We do all of these splittings independently. Next, we place these newly formed dissociated sets in the order
\begin{align}\label{eq:splitandrearrange}
    D_1^{(1)},D_1^{(2)},D_2^{(1)},D_2^{(2)},\ldots,D_{s}^{(1)},D_s^{(2)},D_1^{(3)},D_1^{(4)},D_2^{(3)},D_2^{(4)},\ldots,D_s^{(3)},D_{s}^{(4)}
\end{align}
and note that of course the decomposition $$\lambda\cdot A = P\cup N\cup (\cup_{i=1}^4\cup_{j=1}^{s}D_j^{(i)})$$ still holds (with the same value of $\delta$).  For notational convenience, write $T_1,T_2,\dots,T_u$ (with $u=4s$) for the new sequence of dissociated sets in \eqref{eq:splitandrearrange}, and let $\tau_j\vcentcolon=\sum_{t\in T_j}t$.

Let us pause at this point and describe the remainder of the strategy for proving Theorem \ref{thm:main}.  We will eventually construct a two-sided valid ordering of $A$ of the form $$\overline{\bp},\mathbf{t}_1,\dots,\mathbf{t}_u,\bn,$$ where each $\mathbf{t}_i$ is an ordering of $T_i$ chosen randomly according to a certain distribution. Our task will be to show that such an ordering $a_1,\dots,a_{|A|}$ is likely to avoid zero-sum subintervals, namely, proper nonempty intervals $ I\subset[|A|]$ with $\sum_{i\in I}a_i=0$.  For the remainder of the paper, we will refer to proper nonempty intervals as simply ``intervals''.  We divide such intervals $I$ into two ``types'', which we will treat using different arguments.  Recall that $K=c_2 R^{1/3}$.

\begin{definition}
    Let $I\subset [|A|]$ be a proper nonempty interval. We say that $I$ is \emph{Type II} if it contains between $K$ and $|T_j|-K$ elements of some $T_j$, and otherwise we say that it is \emph{Type I}.
\end{definition}
We will refer to the first $K$ elements in an ordering $\bt=t_1,\dots,t_m$ as its left border and to the final $K$ elements as its right border. The remaining elements $t_{K+1},\dots,t_{m-K}$ make up the interior region of $\bt$.  In this language (and ignoring intervals contained in a single $T_j$, which can never be zero-sum), a Type II interval  is an interval with at least one endpoint in the interior region of one of the orderings $\bt_j$, and a Type I interval is an interval with each endpoint in $\overline{\bp}$, $\bn$, or a border region of some $\bt_j$.
One should think of Type II intervals as generic and of Type I intervals as exceptional.  (Obviously the identification of intervals $I\subset [|A|]$ as Type I and Type II does not depend on the random choices of the $\bt_j$'s).

The main benefit of the above splitting-and-rearranging procedure is that it lets us dispose of nearly all Type I intervals even before we choose the random orderings $\bt_j$.  The following lemma makes this precise.  We say that an event holds \emph{with high probability} if it holds with probability tending to $1$ as $p$ tends to infinity.


\begin{lemma}\label{splitandrearrange}
Let $c>0$ be any constant.  Let $1 \leqslant s \leqslant e^{c(\log p)^{1/4}}$, and let $D_1, \ldots, D_s \subseteq \mathbb{F}_p$ be dissociated sets each of size $\asymp R$, with the property that $D_1 \cup D_s \cup\{\delta\}$ is dissociated.  Let $\bp$ and $\bn$ be sequences over $\mathbb{F}_p$ each of length at most $e^{c(\log p)^{1/4}}$, and assume that $\overline{\bp},\delta,\bn$ is a two-sided valid ordering.  If the sequence $T_1, \ldots, T_u$ of dissociated sets is chosen randomly as described above, then each $|T_j|\asymp R$ and each $T_{2j-1}\cup T_{2j}$ is dissociated, and the following holds with high probability:
\begin{enumerate}[(i)]
    \item for each proper nonempty interval $I=[i,j] \subseteq [u]$, we have that 
    $$0 \notin \left(\sum_{\leqslant K}(T_{i-1}) \cup -\sum_{\leqslant K}(T_i) \right)+\tau_i+\cdots+\tau_j+ \left(-\sum_{\leqslant K}(T_j) \cup \sum_{ \leqslant K}(T_{j+1}) \right)$$
    (with the convention that $T_0=T_{u+1}=\emptyset$);
    \item for each $1 \leqslant j \leqslant u-1$, we have that
    $$0\notin \IS(\bp)+\tau_1+\cdots+\tau_j+ \left(-\sum_{\leqslant K}(T_j) \cup \sum_{ \leqslant K}(T_{j+1}) \right);$$
    and for each $2 \leqslant j \leqslant u$, we have that
    $$0 \notin \IS(\bn)+\tau_u+\cdots+\tau_j+ \left(-\sum_{\leqslant K}(T_j) \cup \sum_{ \leqslant K}(T_{j-1}) \right);$$
    \item the ordering $\overline \bp,\tau_1\dots,\tau_{u},\mathbf{n}$ is two-sided valid.
\end{enumerate}
\end{lemma}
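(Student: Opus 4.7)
The claims that each $|T_j|\asymp R$ and each $T_{2j-1}\cup T_{2j}$ is dissociated are immediate from the construction, since each $T_j$ is one of the four quarter-pieces $D_k^{(l)}$ of some dissociated $D_k$, and each $T_{2j-1}\cup T_{2j}$ is a union of two quarter-pieces of the same $D_k$ (either $D_j^{(1)}\cup D_j^{(2)}$ for $j\leqslant s$ or $D_{j-s}^{(3)}\cup D_{j-s}^{(4)}$ for $j>s$). For the probabilistic claims~(i),~(ii), and~(iii), the plan is to run a union bound whose per-configuration input is Lemma~\ref{splittinglemma} combined with the independence of the partitions of distinct $D_k$'s.

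The key estimate I would aim for is this: for any proper nonempty interval $[i,j]\subseteq[u]$, any ``pattern'' specifying whether each of the two summands $a,b$ in~(i) comes from the outside block ($T_{i-1}$ or $T_{j+1}$) or the inside block ($T_i$ or $T_j$), and any fixed candidate subsets $\widetilde S_\ell\subseteq D_{k_\ell}$ of size at most $K$ playing the role of the summands from $\sum_{\leqslant K}(T_\ell)$, the probability (over the random partitions) that $\tau_i+\cdots+\tau_j+a+b$ equals zero is at most $e^{-\Omega(R)}$. To prove this, I would select some $k^*\in[1,s]$ such that $I_{k^*}\vcentcolon=\{l\in[4]:D_{k^*}^{(l)}\in\{T_i,\ldots,T_j\}\}$ is a nonempty proper subset of $[4]$. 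Such a $k^*$ exists for every proper nonempty $[i,j]$: the four blocks of any single $D_k$ occupy positions $2k-1,\,2k,\,2s+2k-1,\,2s+2k$, split between the first half $[1,2s]$ and the second half $[2s+1,4s]$ of the $T$-sequence, so if every $I_k$ lay in $\{\emptyset,[4]\}$ then $[i,j]$ would be the union $\bigcup_{k\in T}\{2k-1,2k,2s+2k-1,2s+2k\}$ for some $T\subseteq[1,s]$, and this union is contiguous only when $T=\emptyset$ or $T=[1,s]$, contradicting properness and nonemptiness. Given such $k^*$, the contribution of $D_{k^*}$ to the expression equals $\sigma(X_{k^*})$ for a subset $X_{k^*}\subseteq D_{k^*}$ that differs from the random union $\bigcup_{l\in I_{k^*}}D_{k^*}^{(l)}$ only by the (fixed) subsets $\widetilde S_\ell$ whose associated block $T_\ell$ happens to lie in $D_{k^*}$. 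By the dissociation of $D_{k^*}$, conditional on the partitions of the other $D_k$'s, pinning $\sigma(X_{k^*})$ to any given value is equivalent to forcing $\bigcup_{l\in I_{k^*}}D_{k^*}^{(l)}$ to equal a specific subset of $D_{k^*}$, and by Lemma~\ref{splittinglemma} this occurs with probability at most $e^{-\Omega(R)}$.

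To finish~(i), I would union-bound over the $O(u^2)$ intervals, four patterns, and at most $(R+1)^{2K}=e^{O(R^{1/3}\log R)}$ choices of $\widetilde S_\ell$'s (using $K=c_2R^{1/3}$). Since $u\leqslant 4s\leqslant e^{O((\log p)^{1/4})}$ and $R\geqslant c_1(\log p)^{1/2}$, the total failure probability is at most $\exp\bigl(O((\log p)^{1/4})+O(R^{1/3}\log R)-\Omega(R)\bigr)=o(1)$. Part~(ii) is the same argument with one boundary summand replaced by a fixed element of $\IS(\bp)$ or $\IS(\bn)$; this adds an extra factor of at most $e^{c(\log p)^{1/4}}+1$ to the union bound, which is still absorbed by $e^{-\Omega(R)}$. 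For~(iii), I would split the proper nonempty subintervals of $\overline{\bp},\tau_1,\ldots,\tau_u,\bn$ by type: subintervals within $\overline{\bp}$ or $\bn$ are nonzero by two-sided validity of $\overline{\bp},\delta,\bn$; subintervals within $\tau_1,\ldots,\tau_u$ are nonzero by~(i) (with empty boundary subsets) when proper in $[u]$ and by $\delta\neq 0$ (Proposition~\ref{prop:refined-structure}(iii)) when equal to all of $[u]$; mixed subintervals on the $\bp$-side are covered by~(ii) when the $\tau$-portion does not reach $\tau_u$ and by two-sided validity of $\overline{\bp},\delta,\bn$ otherwise (and symmetrically on the $\bn$-side); and subintervals spanning all three regions correspond to proper subintervals of $\overline{\bp},\delta,\bn$, hence have nonzero sum.

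The main obstacle is the clean handling of the interaction between the random partition and the boundary subsets $\widetilde S_\ell$, since $\widetilde S_\ell$ is nominally a subset of the \emph{random} block $T_\ell$. I would resolve this by parameterizing $\widetilde S_\ell$ in the union bound as a fixed subset of $D_{k_\ell}$ of size $\leqslant K$ and absorbing the event $\{\widetilde S_\ell\subseteq D_{k_\ell}^{(l_\ell)}\}$ into the probability estimate; because $K\ll R$, these subsets cannot fill a block and therefore cannot trivialize $X_{k^*}$, so the Lemma~\ref{splittinglemma} bound survives in every configuration, including the short-interval cases where some $T_{i-1},T_i,T_j$, or $T_{j+1}$ coincides with a block of $D_{k^*}$.
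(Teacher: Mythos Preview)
Your argument is correct and follows essentially the same route as the paper's proof. Both proofs hinge on the observation that for every proper nonempty interval $[i,j]\subseteq[u]$ there exists some $k^*$ whose four quarter-pieces are only partially covered by $[i,j]$, after which Lemma~\ref{splittinglemma} and independence of the splittings give the $e^{-\Omega(R)}$ anti-concentration bound; the union bound over intervals and ``border'' contributions then finishes (i) and (ii), and (iii) is read off from (i), (ii), and the hypothesis on $\overline{\bp},\delta,\bn$.

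The only organizational difference is that the paper first isolates the clean statement $\mathbb{P}(\tau_i+\cdots+\tau_j=x)\leqslant e^{-\Omega(R)}$ for \emph{every fixed} $x$, and then simply bounds the cardinality of the border set $\bigl(\sum_{\leqslant K}(T_{i-1})\cup-\sum_{\leqslant K}(T_i)\bigr)+\bigl(-\sum_{\leqslant K}(T_j)\cup\sum_{\leqslant K}(T_{j+1})\bigr)$ by $e^{O(R\,H(O(K/R)))}=e^{o(R)}$; you instead enumerate fixed subsets $\widetilde S_\ell\subseteq D_{k_\ell}$ and bound each configuration separately. Your formulation is slightly more laborious but has the virtue of being explicit about why the randomness of the border set (through its dependence on the $T_\ell$'s) causes no trouble: by passing to fixed subsets of the deterministic $D_{k_\ell}$'s you reduce to a genuinely fixed target value for $\sigma\bigl(\bigcup_{l\in I_{k^*}}D_{k^*}^{(l)}\bigr)$. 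The paper's version leaves this implicit (it is easily justified by replacing each $T_\ell$ in the border set by the larger deterministic $D_{k_\ell}$). Two cosmetic remarks: the concern in your final paragraph about $\widetilde S_\ell$ ``trivializing $X_{k^*}$'' is unnecessary, since $\sigma(\widetilde S_\ell)$ is a fixed constant regardless of its size and the anti-concentration for $\bigcup_{l\in I_{k^*}}D_{k^*}^{(l)}$ is unaffected; and for (iii), $\delta\neq 0$ already follows from the lemma's own hypothesis that $D_1\cup D_s\cup\{\delta\}$ is dissociated, so there is no need to invoke Proposition~\ref{prop:refined-structure}.
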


Three remarks are in order before we proceed to the proof.
\begin{enumerate}
    \item To see how this lemma pertains to Type I intervals containing nearly all (i.e., at least $|T_j|-K$ elements) of some $T_j$, simply note the identity
    $\sum_{ \geqslant|T_j|-K}(T_j)=\tau_j-\sum_{ \leqslant K}(T_j)$.
    \item Items (i)--(iii) handle all Type I intervals except for the following:
    \begin{itemize}
        \item intervals fully contained in a single $T_j$;
        \item intervals starting in the left border of $\bt_1$ and ending in the right border of $\bt_u$;
        \item intervals beginning in the right border of $\bt_j$ and ending in the left border of $\bt_{j+1}$ for some $j$;
        \item intervals with one endpoint in $\overline{\bp}$ or $\bn$ and the other endpoint in the left border of $\bt_1$ or the right border of $\bt_u$.
    \end{itemize}
    Moreover, the first case cannot lead to zero-sum intervals because each $T_j$ is dissociated; likewise, there cannot be zero-sum intervals in the second case because of the assumption that $D_1 \cup D_s \cup \{\delta\}$ (and a fortiori $T_1 \cup T_u \cup \{\delta\}$) is dissociated.
    In the third case, we never have to worry about zero-sum intervals with $j$ odd since each $T_{2k-1} \cup T_{2k}$ is dissociated.
    \item The lemma would continue to hold with $K$ as large as a small constant times $R$, but we will not have occasion to make use of this fact.
\end{enumerate}

\begin{proof}[Proof of Lemma \ref{splitandrearrange}]
    We begin with the crucial observation that if $I\subset [u]$ is any proper nonempty subinterval and $x \in \mathbb{F}_p$ is any element, then we have the anti-concentration inequality
    $$\mathbb{P}\left(\sum_{i\in I}\tau_i=x\right)=e^{-\Omega(R)}.$$
    Indeed, there is some $j \in [s]$ such that $\{T_i:i\in I\}$ contains at least one but not all of $D_j^{(1)},\dots,D_j^{(4)}$.  Suppose that it contains $D_j^{(1)}$ but none of $D_j^{(2)},D_j^{(3)},D_j^{(4)}$ (the remaining cases are analogous).  Since the splitting of $D_j$ is independent of the splittings of the other $D_k$'s, Lemma \ref{splittinglemma} gives
\begin{align*}
\mathbb{P}\left(\sum_{i\in I}\tau_i=x\right) &=\sum_{z \in \mathbb{F}_p} \mathbb{P} \left(\sum_{i \in I\setminus \{2j-1\}}\tau_i =z \quad \text{and} \quad \tau_{2j-1}=x-z \right)\\
 &=\sum_{z \in \mathbb{F}_p} \mathbb{P} \left(\sum_{i \in I\setminus \{2j-1\}} \tau_i=z \right) \mathbb{P} \left(\sum_{d\in D_j^{(1)}}d=x-z \right)\\
 & \leqslant \sum_{z \in \mathbb{F}_p} \mathbb{P} \left(\sum_{i \in I\setminus \{2j-1\}} \tau_i=z \right) e^{-\Omega(|D_j|)}=e^{-\Omega(R)}.
\end{align*}    
With this observation in hand, we proceed to the main body of the proof.  Note that (iii) holds whenever (i) and (ii) hold since $0 \in \sum_{ \leqslant K}(T_j)$ and as we assumed that $\overline{\bp},\delta, \bn$ is two-sided valid. So, by the union bound, it suffices to show that each  of (i) and (ii) holds with high probability.

We begin with (i).  Fix some proper nonempty interval $I=[i,j] \subseteq [u]$.  The assertion of (i) for this $I$ is that
$$\tau_i+\cdots+\tau_j \notin \left(-\sum_{\leqslant K}(T_{i-1}) \cup \sum_{\leqslant K}(T_i) \right)+\left(\sum_{\leqslant K}(T_j) \cup -\sum_{ \leqslant K}(T_{j+1}) \right).$$
The set on the right-hand side has size at most
$$\left(\left|\sum_{\leqslant K}(T_{i-1}) \right|+\left|\sum_{\leqslant K}(T_i)\right|\right) \left(\left|\sum_{\leqslant K}(T_j) \right|+\left|\sum_{\leqslant K}(T_{j+1})\right|\right) \leqslant e^{O(R \cdot  H(O(K/R)))},$$
where $H(x)\vcentcolon=-x\log_2(x)-(1-x)\log_2(1-x)$ is the binary entropy function.  The definition of $K$ ensures that $H(O(K/R))=o(1)$ (with a lot of room to spare), and then the observation from the beginning of the proof tells us that (i) fails for $I$ with probability at most $e^{o(R)-\Omega(R)}=e^{-\Omega(R)}$.
A union bound over the (at most $u^2$) choices of $I$ shows that (i) fails with probability at most
$$u^2 e^{-\Omega(R)}\leqslant e^{2c (\log p)^{1/4}-\Omega(c_1 (\log p)^{3/4})}=o(1),$$
again with plenty of room to spare.

The proof of (ii) is nearly identical and we omit it; we remark that the bounds $|\IS(\bp)|, |\IS(\bn)| \leqslant |A|\leqslant e^{c (\log p)^{1/4}}$ hold because we fixed $\bp$ and $\bn$ in advance.

\end{proof}

As noted in remark (2) following Lemma \ref{splitandrearrange}, there remain two sorts of Type I intervals to address. The first is Type I intervals contained in $T_{2k}\cup T_{2k+1}$ for some $k$. We can avoid zero-sums here by picking the orderings $\bt_{2k},\bt_{2k+1}$ according to a suitable joint distribution which we will describe in section \ref{sec:random}. The second is Type I intervals with one endpoint in $\overline{\bp}$ or $\bn$ and the other endpoint in the left border of $\bt_1$ or the right border of $\bt_u$. The crucial ingredient for dealing with these will turn out to be the last part of Proposition \ref{prop:ordering-P-N}.  Since Proposition \ref{prop:ordering-P-N} must be applied prior to the random splitting procedure described in this section, it is a bit of a nuisance that the input sets $Y_j^+,Y_j^-$ must be described in terms of the sets $D_j$ rather than the sets $T_j$.  The following lemma will let us remedy this issue.

\begin{lemma}\label{lemm:fewcollisions}
    Let $T_1=D_1^{(1)}$ and $T_u=D_s^{(4)}$ be the random sets from \eqref{eq:splitandrearrange}. Then with probability at least $1/2$, we have for all $1\leqslant j\leqslant K$ that
    \begin{align*}
        \left|\sum_{=j}(T_1)\cap (-\operatorname{IS}(\bp)\cup(\delta+\operatorname{IS}(\bn)))\right|\leqslant 4K\frac{{|T_1|\choose j}}{{|D_1|\choose j}}\left|\sum_{=j}(D_1)\cap (-\operatorname{IS}(\bp)\cup(\delta+\operatorname{IS}(\bn)))\right|,\\
        \left|\sum_{=j}(T_u)\cap (-\operatorname{IS}(\bp)\cup(\delta+\operatorname{IS}(\bn)))\right|\leqslant 4K\frac{{|T_u|\choose j}}{{|D_s|\choose j}}\left|\sum_{=j}(D_s)\cap (-\operatorname{IS}(\bp)\cup(\delta+\operatorname{IS}(\bn)))\right|.
    \end{align*}
\end{lemma}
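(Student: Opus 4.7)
The plan is to prove this via a straightforward first-moment argument combined with Markov's inequality. The key insight is that because $D_1$ is dissociated, every element $x \in \sum_{=j}(D_1)$ arises from a \emph{unique} $j$-element subset $S_x \subseteq D_1$; consequently, $x$ lies in $\sum_{=j}(T_1)$ if and only if $S_x \subseteq T_1$. Since $T_1 = D_1^{(1)}$ is a uniformly random subset of $D_1$ of size $|T_1|$, a standard hypergeometric computation gives
\begin{equation*}
\mathbb{P}(S_x \subseteq T_1) \;=\; \frac{\binom{|D_1|-j}{|T_1|-j}}{\binom{|D_1|}{|T_1|}} \;=\; \frac{\binom{|T_1|}{j}}{\binom{|D_1|}{j}}
\end{equation*}
for any fixed $j$-subset $S_x \subseteq D_1$.

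Writing $Z \vcentcolon= -\IS(\bp) \cup (\delta + \IS(\bn))$, linearity of expectation then yields
\begin{equation*}
\mathbb{E}\left[\left|\sum_{=j}(T_1) \cap Z\right|\right] \;=\; \frac{\binom{|T_1|}{j}}{\binom{|D_1|}{j}}\left|\sum_{=j}(D_1) \cap Z\right|,
\end{equation*}
since each $x \in \sum_{=j}(D_1) \cap Z$ contributes to $\sum_{=j}(T_1) \cap Z$ exactly when $S_x \subseteq T_1$. Next I would apply Markov's inequality to the nonnegative random variable $|\sum_{=j}(T_1) \cap Z|$ to conclude that for each fixed $j$, the desired bound (with factor $4K$) fails with probability at most $1/(4K)$.

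Finally I would take a union bound over $j \in \{1,\dots,K\}$ to see that the bound for $T_1$ fails for some $j$ with probability at most $K \cdot 1/(4K) = 1/4$. The identical argument applied to $T_u = D_s^{(4)}$ (which is also a uniformly random subset of $D_s$ of size $|T_u|$, regardless of whether $s = 1$ or $s > 1$) shows that the bound for $T_u$ fails with probability at most $1/4$. A final union bound gives total failure probability at most $1/2$, establishing the claim. There is no real obstacle here: the dissociated property converts the problem into one about uniformly sampling fixed subsets, and the dependence between $T_1$ and $T_u$ (nontrivial only in the case $s=1$) is absorbed by the union bound.
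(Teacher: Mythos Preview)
Your proposal is correct and essentially identical to the paper's own proof: both use the dissociated property to identify $\left|\sum_{=j}(\cdot)\cap Z\right|$ with a count of $j$-subsets, compute the expectation via the hypergeometric probability $\binom{|T_1|}{j}/\binom{|D_1|}{j}$, apply Markov's inequality to get failure probability $\leqslant 1/(4K)$ for each $j$, and finish with a union bound over $j$ and over the two sets $T_1,T_u$. Your explicit remark about the possible dependence between $T_1$ and $T_u$ when $s=1$ is a nice touch that the paper leaves implicit.
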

\begin{proof}
    Since $D_1$ is dissociated, the quantity $\left|\sum_{=j}(D_1)\cap (-\operatorname{IS}(\bp)\cup(\delta+\operatorname{IS}(\bn)))\right|$ simply counts the subsets $S\subseteq D_1$ of size $|S|=j$ with $\sum_{d\in S}d\in -\operatorname{IS}(\bp)\cup(\delta+\operatorname{IS}(\bn))$, and likewise for $T_1$. As $T_1=D_1^{(1)}$ is chosen uniformly from all subsets of $D_1$ of size $|D_1|/4$, we have
    \begin{align*}
        \mathbb{E}\left(\left|\sum_{=j}(T_1)\cap (-\operatorname{IS}(\bp)\cup(\delta+\operatorname{IS}(\bn)))\right|\right)=\frac{{|T_1|\choose j}}{{|D_1|\choose j}}\left|\sum_{=j}(D_1)\cap (-\operatorname{IS}(\bp)\cup(\delta+\operatorname{IS}(\bn)))\right|,
    \end{align*}
    and Markov's Inequality implies that the first bound in the conclusion of the lemma fails for each $j$ with probability at most $1/4K$.  The same argument applies with $D_s,T_u$ in place of $D_1,T_1$, and the conclusion of the lemma follows from a union bound over $1 \leqslant j \leqslant K$.
\end{proof}

\section{Randomizing the dissociated sets}\label{sec:random}

We are finally ready to describe how we will construct a two-sided valid ordering of $A$. Suppose that $A\subseteq \mathbb{F}_p \setminus \{0\}$ has size $|A|\leqslant e^{c(\log p)^{1/4}}$.  After we replace $A$ by a suitable dilate (which is harmless with regard to finding two-sided valid orderings), Proposition \ref{prop:refined-structure} provides a decomposition
\begin{align*}
    A=P\cup N\cup(\cup_{j=1}^s D_j)
\end{align*} satisfying conditions (i)-(v) of that proposition and $\delta=\sum_j\sum_{d\in D_j}d>0$. Now, with $K=c_2R^{1/3}$ for a suitably small constant $c_2>0$, set
\begin{align}\label{Y_jdefinition}
    Y^+_j\vcentcolon=-\sum_{= j}(D_1)\cup\left(-\delta+\sum_{= j}(D_s)\right) \quad \text{and} \quad Y^-_j\vcentcolon=-\sum_{= j}(D_s)\cup\left(-\delta+\sum_{= j}(D_1)\right)
\end{align} for each $1\leqslant j \leqslant K$, and apply Proposition \ref{prop:ordering-P-N}. This provides orderings $\bp$ of $P$ and $\bn$ of $N$ such that the sequence $\overline{\bp},\delta,\bn$ is two-sided valid and such that \eqref{ISp-bound},\eqref{ISn-bound} hold.  Finally, we can use Lemmas \ref{splitandrearrange} and \ref{lemm:fewcollisions} to obtain dissociated sets $T_1. \ldots, T_u$ from $D_1, \ldots, D_s$ such that $A=P\cup N\cup(\cup_i T_i)$ and the conclusions of these two lemmas are simultaneously satisfied; fix such a choice of $T_1, \ldots, T_u$. Recall that $\tau_i=\sum_{t\in T_i}t$, and write $m_i:=|T_i|$. The two-sided valid ordering of $A$ that we will construct will be of the form $$\overline{\bp},\bt_1,\dots,\bt_u,\bn,$$ where the $\mathbf{t}_i$'s are orderings of the $T_i$'s chosen randomly according to certain distributions, whose description and analysis occupies the remainder of this section.

Recall that if $T$ is a dissociated set, then all of the subset sums of $T$ are distinct.  In particular, in a uniformly random ordering of the elements of $T$, the sum of the first $k$ elements is uniformly distributed on $\binom{|T|}{k}$ values.  As long as $k$ is not too close to $0$ or $|T|$, this sum is very anti-concentrated, and so with very high probability it will avoid any particular small set of values.  It follows that uniformly random orderings $\bt_i$ would with high probability avoid zero-sum Type II intervals.  We can ignore most Type I intervals due to Lemma \ref{splitandrearrange}, but the remaining Type I intervals, as described in remark (2) following that lemma, still cause issues.  We will show that each of these potential zero-sum Type I intervals can be avoided ``locally'' by introducing some non-uniformity into the distributions determining the orderings $\bt_i$.

We begin with the orderings $\bt_1,\bt_u$.  Say that an ordering $t_1,\dots,t_{m_1}$ of $T_1$ is \emph{acceptable} if $$t_1+\dots+t_k\notin -\IS(\bp)\cup \left(\delta+\IS(\bn)\right) \quad \text{for all $1 \leqslant k \leqslant K$},$$ and say that an ordering $t_1,\dots,t_{m_u}$ of $T_u$ is \emph{acceptable} if $$t_1+\dots+t_k\notin -\IS(\bn)\cup \left(\delta+\IS(\bp)\right) \quad \text{for all $1 \leqslant k \leqslant K$}.$$
Using Proposition \ref{prop:ordering-P-N} and the fact that $T_1,T_u$ satisfy the conclusion of Lemma \ref{lemm:fewcollisions}, we can show that uniformly random orderings of $T_1,T_u$ are acceptable with large probability.
\begin{lemma}\label{lemm:permissible}
    A uniformly random ordering of $T_1$ is acceptable with probability at least $0.98$, and a uniformly random ordering of $T_u$ is acceptable with probability at least $0.98$. 
\end{lemma}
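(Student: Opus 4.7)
The plan is to treat $T_1$ in detail; the argument for $T_u$ is entirely analogous after swapping the roles of $D_1/D_s$ and $\bp/\bn$. The core observation is that since $T_1$ is dissociated, its $\binom{m_1}{k}$ subsets of size $k$ have pairwise distinct sums, so in a uniformly random ordering of $T_1$ the partial sum $t_1+\cdots+t_k$ is uniformly distributed on $\sum_{= k}(T_1)$. Consequently, for any fixed $k$,
\begin{equation*}
\mathbb{P}\bigl(t_1+\cdots+t_k \in -\IS(\bp)\cup(\delta+\IS(\bn))\bigr)
=\frac{\bigl|\sum_{= k}(T_1)\cap(-\IS(\bp)\cup(\delta+\IS(\bn)))\bigr|}{\binom{m_1}{k}}.
\end{equation*}
By a union bound over $1\leqslant k\leqslant K$, it suffices to show that these probabilities sum to at most $0.02$.

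The second step is to pass from the random set $T_1$ to the deterministic set $D_1$ using Lemma~\ref{lemm:fewcollisions}. Applying that lemma cancels the $\binom{m_1}{k}$ in the denominator against $\binom{m_1}{k}/\binom{|D_1|}{k}$, yielding the bound
\begin{equation*}
\mathbb{P}\bigl(\cdots\bigr) \;\leqslant\; \frac{4K\,\bigl|\sum_{=k}(D_1)\cap(-\IS(\bp)\cup(\delta+\IS(\bn)))\bigr|}{\binom{|D_1|}{k}}.
\end{equation*}
The numerator splits into two pieces. Since $-\sum_{=k}(D_1)\subseteq Y_k^+$ by definition~\eqref{Y_jdefinition}, the first piece is at most $|\IS(\bp)\cap Y_k^+|$; since $-\delta+\sum_{=k}(D_1)\subseteq Y_k^-$, the second piece is at most $|\IS(\bn)\cap Y_k^-|$.

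The third step is to invoke Proposition~\ref{prop:ordering-P-N} (whose conclusions~\eqref{ISp-bound},~\eqref{ISn-bound} are exactly what the choice of $Y_k^\pm$ was designed to exploit) with $L=\lceil\sqrt{|Y_k^\pm|}\rceil$. Using $|Y_k^\pm|\leqslant 2\binom{R}{k}$ (both $D_1,D_s$ are dissociated of size $\asymp R$ with $|D_1|=|D_s|$) and the geometric-series estimate $\sum_{i<k}\binom{R}{i}\lesssim\binom{R}{k-1}$ valid for $k\ll R$, we obtain
\begin{equation*}
|\IS(\bp)\cap Y_k^+|+|\IS(\bn)\cap Y_k^-|\;\lesssim\;\sqrt{\binom{R}{k}}+\binom{R}{k-1}.
\end{equation*}
Dividing by $\binom{|D_1|}{k}\asymp\binom{R}{k}$ and using $\binom{R}{k-1}/\binom{R}{k}=k/(R-k+1)\asymp k/R$, we arrive at the per-step bound
\begin{equation*}
\mathbb{P}\bigl(\cdots\bigr)\;\lesssim\;\frac{K}{\sqrt{\binom{R}{k}}}+\frac{Kk}{R}.
\end{equation*}

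Finally, summing over $1\leqslant k\leqslant K=c_2R^{1/3}$: the first term is dominated by $k=1$ and contributes $O(K/\sqrt{R})=O(c_2R^{-1/6})=o(1)$, while the second term contributes $O(K\cdot K^2/R)=O(K^3/R)=O(c_2^3)$. Choosing $c_2$ sufficiently small absorbs the $O(c_2^3)$ into $0.01$, and for $p$ large the $o(1)$ term is at most $0.01$, so the total failure probability is at most $0.02$ as required. The main delicate point I expect is the third step: the additive contribution $\sum_{i<k}|Y_i^+|$ from Proposition~\ref{prop:ordering-P-N} turns into the term $O(Kk/R)$ that dominates the failure probability, and one must verify that its sum over $k$ is a pure power of $c_2$ (rather than something that blows up with $R$); this is exactly what forces the choice $K\asymp R^{1/3}$.
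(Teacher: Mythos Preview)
Your proposal is correct and follows essentially the same approach as the paper: both use the union bound over $1\leqslant k\leqslant K$, transfer from $T_1$ to $D_1$ via Lemma~\ref{lemm:fewcollisions}, and invoke Proposition~\ref{prop:ordering-P-N} with $L\asymp |Y_k^\pm|^{1/2}$ to control the intersection sizes, arriving at the same dominant contributions $O(R^{-1/6})$ from $k=1$ and $O(c_2^3)$ from $k\geqslant 2$. One cosmetic point: your binomials should be $\binom{|D_1|}{k}$ rather than $\binom{R}{k}$ (the claim $\binom{|D_1|}{k}\asymp\binom{R}{k}$ is false uniformly in $k$ when $|D_1|\neq R$), but since both numerator and denominator involve the same $\binom{|D_1|}{\cdot}$, the relevant ratio $\binom{|D_1|}{k-1}/\binom{|D_1|}{k}\asymp k/R$ is unaffected and your final bounds stand.
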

\begin{proof}
We prove only the statement for $T_1$ since the argument for $T_u$ is identical.  Let $t_1, \ldots, t_{|T_1|}$ be our uniformly random ordering of $T_1$.  By the union bound, it suffices to show that $\mathbb{P}(t_1 \in -\IS(\bp) \cup (\delta+\IS(\bn))) \leqslant 0.01$ and that
$$\mathbb{P}(t_1+\cdots+t_k \in -\IS(\bp) \cup (\delta+\IS(\bn))) \leqslant 0.01K^{-1}$$
for each $2 \leqslant k \leqslant K$.
Fix some $1 \leqslant k \leqslant K$.  Then the quantity $t_1+\cdots+t_k$ is uniformly distributed on the set $\sum_{=k}(T_1)$, which has size $\binom{|T_1|}{k}$.
Recall that we applied Proposition \ref{prop:ordering-P-N} with the sets $Y_j^{+},Y_j^-$ as in \eqref{Y_jdefinition}.  Since $|D_1|=|D_s|$ and $D_1 \cup D_s \cup \{\delta\}$ is dissociated by Proposition \ref{prop:refined-structure}, we have $|Y_j^+|=|Y_j^-|=2\binom{|D_1|}{j}$ for all $j$.  Then the conclusion of Proposition \ref{prop:ordering-P-N}, with $L:=\lfloor |Y_k^+|^{1/2}\rfloor$, gives
$$\left|\sum_{=k}(D_1) \cap (-\IS(\bp) \cup (\delta+\IS(\bn))) \right| \ll |Y_k^+|^{1/2}+1+\sum_{j<k} |Y_j^+|.$$
For $k=1$, this gives (recall that $|D_1|\asymp R$)
$$\left|\sum_{=1}(D_1) \cap (-\IS(\bp) \cup (\delta+\IS(\bn))) \right| \ll |Y_1^+|^{1/2}\ll\binom{|D_1|}{1} \cdot R^{-1/2},$$
and for $2 \leqslant k \leqslant K$ (recall that $K=c_2 R^{1/3}$) it gives
$$\left|\sum_{=k}(D_1) \cap (-\IS(\bp) \cup (\delta+\IS(\bn))) \right| \ll \binom{|D_1|}{k} \cdot \frac{K}{|D_1|} \ll \binom{|D_1|}{k} \cdot c_2^3 K^{-2}.$$

Since the conclusion of Lemma \ref{lemm:fewcollisions} also holds, we can ``transfer'' this bound from $D_1$ to $T_1$.  In particular, we obtain that
$$\left|\sum_{=1}(T_1) \cap (-\IS(\bp) \cup (\delta+\IS(\bn))) \right| \ll 4K\binom{|T_1|}{1} \cdot R^{-1/2}$$
and that
$$\left|\sum_{=k}(T_1) \cap (-\IS(\bp) \cup (\delta+\IS(\bn))) \right| \ll 4K\binom{|T_1|}{k} \cdot \frac{K}{|D_1|} \ll 4K \binom{|T_1|}{k} \cdot c_2^3 K^{-2}$$
for $2 \leqslant k \leqslant K$.
It follows that
$$\mathbb{P}(t_1 \in -\IS(\bp) \cup (\delta+\IS(\bn))) \ll R^{-1/6}$$
is certainly at most $0.01$, and for $2 \leqslant k \leqslant K$ we see that
$$\mathbb{P}(t_1+\cdots+t_k \in -\IS(\bp) \cup (\delta+\IS(\bn)))\ll c_2^3 K^{-1}$$
is at most $0.01 K^{-1}$ as long as $c_2$ is sufficiently small.  This completes the proof.

\end{proof}
We now choose $\bt_1,\bt_u$ independently such that $\bt_1, \overline{\bt_u}$ are  uniformly random acceptable orderings of $T_1,T_u$, respectively.  We deduce from Lemma \ref{lemm:permissible} that the random variables $\bt_1, \bt_u$ are highly anti-concentrated in the sense that the probability of $\bt_1$ assuming any particular ordering is at most $\ll 1/m_1!$, and likewise for $\bt_u$.  Notice that the constraint that $\bt_1,\overline{\bt_u}$ are acceptable precisely guarantees the absence of zero-sum Type I intervals with one endpoint in $\overline{\bp}$ or $\bn$ and the other endpoint in the left border of $\bt_1$ or the right border of $\bt_u$.

For each $1 \leqslant j \leqslant u/2-1$, we choose the pair of orderings $\bt_{2j}, \bt_{2j+1}$ as follows.  Recall that $|T_{2j}|=m_{2j}$ and $|T_{2j+1}|=m_{2j+1}$ both have size $\asymp R$.  
Say that a pair of partial orderings $t_1, \ldots, t_k$ of $T_{2j}$ and $t'_1, \ldots, t'_\ell$ of $T_{2j+1}$ is \emph{permissible} if
$$t_1+\cdots+t_i + t'_1+\cdots+t'_j\neq 0 \quad \text{for all $(i,j)$}.$$
Let $N(k,\ell)$ denote the number of permissible pairs with lengths $(k,\ell)$.  Note that each permissible pair with lengths $(k,\ell)$ can be extended to at least $m_{2j}-k-\ell$ permissible pairs of lengths $(k+1,\ell)$ and to at least $m_{2j+1}-k-\ell$ permissible pairs of lengths $(k,\ell+1)$.  It follows that
$$N(k,k) \geqslant (m_{2j})(m_{2j+1}-1)(m_{2j}-2)(m_{2j+1}-3) \cdots (m_{2j}-2k+2)(m_{2j+1}-2k+1).$$
The choice of $K$ ensures that
$N(K,K) \geqslant m_{2j}^K m_{2j+1}^K/2$
(say), which means that the permissible pairs comprise at least a constant fraction of the total pairs.  (In fact, this bound would continue to hold with $K$ as large as a small constant times $R^{1/2}$.)  

We now choose $\bt_{2j}, \bt_{2j+1}$ to be a uniformly random pair of orderings of $T_{2j}, T_{2j+1}$ conditional on the length-$K$ prefixes of $\overline{\bt_{2j}}, \bt_{2j+1}$ forming a permissible pair of length $(K,K)$.  Equivalently, we let $t_1, \ldots, t_K$ and $t'_1, \ldots, t'_K$ be a uniformly random permissible pair of orderings of $T_{2j}, T_{2j+1}$, and then we let $\bt_{2j}$ be a uniformly random ordering of $T_{2j}$ conditional on $\overline{\bt_{2j}}$ beginning with $t_1, \ldots, t_K$, and we let $\bt_{2j+1}$ be a uniformly random ordering of $T_{2j+1}$ conditional on $\bt_{2j+1}$ beginning with $t'_1, \ldots, t'_K$.  We make these choices independently for different values of $j$, and independently of the choices of $\bt_1,\bt_u$.  The following lemma shows that even though the random variables $\bt_{2j}, \bt_{2j+1}$ are dependent, they are ``conditionally anti-concentrated'' in the sense that if we condition on $\bt_{2j}$ being any particular ordering, then the the probability of $\bt_{2j+1}$ assuming any particular ordering is still very small, and vice versa.

\begin{lemma}\label{lem:permissible}
Choose $\bt_{2j}, \bt_{2j+1}$ according to the distribution described above.  Then, conditional on $\bt_{2j}$ assuming any particular ordering, the probability of $\bt_{2j+1}$ assuming any particular ordering is $\ll 1/m_{2j+1}!$; likewise, conditional on $\bt_{2j+1}$ assuming any particular ordering, the probability of $\bt_{2j}$ assuming any particular ordering is $\ll 1/m_{2j}!$.
\end{lemma}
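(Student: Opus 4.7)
The plan is to compute $\mathbb{P}(\bt_{2j+1}=\pi'\mid\bt_{2j}=\pi)$ directly from the construction of the joint distribution and then use the dissociativity of $T_{2j+1}$ to show that almost every ordered $K$-tuple of distinct elements of $T_{2j+1}$ is a compatible extension of the length-$K$ prefix of $\overline{\pi}$. By symmetry, only the first claim needs proof.

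First I would record the joint law explicitly: for any orderings $\pi$ of $T_{2j}$ and $\pi'$ of $T_{2j+1}$, the construction gives $\mathbb{P}(\bt_{2j}=\pi,\bt_{2j+1}=\pi')=[N(K,K)(m_{2j}-K)!(m_{2j+1}-K)!]^{-1}$ when the length-$K$ prefixes of $\overline\pi$ and $\pi'$ form a permissible pair, and $0$ otherwise. Let $(s_1,\dots,s_K)$ be the prefix of $\overline\pi$, and let $P(s_1,\dots,s_K)$ denote the number of ordered $K$-tuples of distinct elements of $T_{2j+1}$ that extend $(s_1,\dots,s_K)$ to a permissible pair. Marginalizing the joint law over $\pi'$ gives $\mathbb{P}(\bt_{2j}=\pi)=P(s_1,\dots,s_K)/[N(K,K)(m_{2j}-K)!]$, and the quotient simplifies to
\[
\mathbb{P}(\bt_{2j+1}=\pi'\mid\bt_{2j}=\pi)\leqslant \frac{1}{P(s_1,\dots,s_K)\,(m_{2j+1}-K)!}.
\]
Thus the lemma reduces to the lower bound $P(s_1,\dots,s_K)\geqslant \tfrac{1}{2}m_{2j+1}(m_{2j+1}-1)\cdots(m_{2j+1}-K+1)$, i.e., to showing that at least a constant fraction of all $K$-tuples are compatible with $(s_1,\dots,s_K)$.

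The main step is then a union bound over ``bad'' $K$-tuples $(t'_1,\dots,t'_K)$, namely those for which $\sigma_i+(t'_1+\cdots+t'_\ell)=0$ for some $0\leqslant i\leqslant K$ and $1\leqslant\ell\leqslant K$, where $\sigma_i\vcentcolon= s_1+\cdots+s_i$. Because $T_{2j+1}$ is dissociated, each pair $(i,\ell)$ admits at most one $\ell$-subset $S\subseteq T_{2j+1}$ with $\sum_{s\in S}s=-\sigma_i$, and a $K$-tuple is bad for this $(i,\ell)$ only if its first $\ell$ entries enumerate this unique $S$ in some order. A routine count of extensions of $S$, followed by division by the total $m_{2j+1}(m_{2j+1}-1)\cdots(m_{2j+1}-K+1)$, shows that the fraction of bad tuples is at most
\[
(K+1)\sum_{\ell=1}^{K}\binom{m_{2j+1}}{\ell}^{-1}\ll \frac{K}{m_{2j+1}}\asymp R^{-2/3}=o(1),
\]
using $K\asymp R^{1/3}$ and $m_{2j+1}\asymp R$; this is comfortably below $1/2$ for $p$ large enough.

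There is no serious obstacle here beyond bookkeeping: once the joint distribution is unwound into counts of permissible prefix pairs, everything reduces to the above union bound, which is easy thanks to the dissociativity of $T_{2j+1}$ (giving uniqueness of subset-sum representations) and the generous gap between the border width $K=c_2 R^{1/3}$ and the block size $m_{2j+1}\asymp R$. The second statement of the lemma follows from the same argument after interchanging the roles of $T_{2j}$ and $T_{2j+1}$.
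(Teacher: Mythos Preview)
Your argument is correct and essentially matches the paper's proof. You unwind the joint law explicitly and then lower-bound $P(s_1,\dots,s_K)$ via a union bound over the at most $(K+1)K$ bad $(i,\ell)$ pairs, using dissociativity to bound each contribution by $\binom{m_{2j+1}}{\ell}^{-1}$. The paper reaches the same lower bound by the greedy extension count already set up before the lemma (the $N(k,\ell)$ discussion): starting from the fixed prefix of $\overline{\bt_{2j}}$, one can build $\bt_{2j+1}^{(K)}$ one entry at a time, with at least $m_{2j+1}-K-\ell$ choices at step $\ell$, giving $\gg m_{2j+1}^K$ permissible extensions. These two counts are equivalent up to bookkeeping, so there is no substantive difference in approach.
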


\begin{proof}
We prove only the first statement.  Let $\bu_{2j}$ be any fixed ordering of $T_{2j}$.  Let $\bu^{(K)}_{2j}$ denote the ordering consisting of the first $K$ elements of $\bu_{2j}$.  The number of permissible pairs $\bu^{(K)}_{2j} \bu_{2j+1}^{(K)}$ with $\bu_{2j+1}^{(K)}$ of length $K$ is at least
$$(m_{2j+1}-K)^K \gg m_{2j+1}^K,$$
by our choices of $R,K$ (see the above discussion of $N(k,\ell)$).  Thus, the number of orderings $\bu_{2j+1}$ of $T_{2j+1}$ such that $\bu^{(K)}_{2j},\bu^{(K)}_{2j+1}$ is a permissible pair is
$$\gg m_{2j+1}^K \cdot (m_{2j+1}-K)! \gg  m_{2j+1}!.$$
The lemma now follows since each of these $\gg m_{2j+1}!$ orderings of $T_{2j+1}$ is equally likely to occur as $\bt_{2j+1}$, after we condition on $\bt_{2j}=\bu_{2j}$.
\end{proof}

Notice that the constraints on the pairs $\bt_{2j}\bt_{2j+1}$ guarantee the absence of zero-sum Type I intervals beginning in the right border of $\bt_{2j}$ and ending in the left border of $\bt_{2j+1}$.

We will show that if the orderings $\bt_1, \ldots, \bt_u$ of $T_1, \ldots, T_u$ are chosen randomly as above, then with high probability the ordering 
\begin{equation}\label{eq:Arandomordering}
    a_1,\dots,a_{|A|}:=\overline{\bp},\mathbf{t}_1,\dots,\mathbf{t}_u,\bn
\end{equation} of $A$ is two-sided valid, i.e., we have $\sum_{i\in I}a_i\neq 0$ for every nonempty proper interval $I \subseteq [|A|]$.  The output of Lemma \ref{splitandrearrange} and the constraints on the random orderings $\bt_j$ together guarantee that there are no zero-sum Type I intervals in the ordering \eqref{eq:Arandomordering}; the reader can refer to remark (2) following Lemma \ref{splitandrearrange} to see how we have covered all possible cases.  It remains to verify that with high probability there are no zero-sum Type II intervals.
The key point is that the sum $\sum_{i \in I}a_i$ for each Type II interval $I$ is highly anti-concentrated because there is still enough randomness in the orderings $\bt_j$; the following lemma makes this observation precise.
\begin{lemma}\label{lemm:typeII}
    Let $I\subset[1,|A|]$ be a Type II interval, and let $a_1,\dots,a_{|A|}=\overline{\bp},\mathbf{t}_1,\dots,\mathbf{t}_u,\bn$ be the random ordering \eqref{eq:Arandomordering} of $A$. Then for every $x\in \mathbb{F}_p$ we have
    \begin{align*}
        \mathbb{P}\left(\sum_{i\in I}a_i=x\right)\leqslant e^{-\Omega(K \log R)}.
    \end{align*}
\end{lemma}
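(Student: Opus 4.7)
The plan is to expose the randomness in the single ordering $\bt_{j_0}$ of the dissociated set $T_{j_0}$ that witnesses the Type II status of $I$; by assumption $k_0 := |I \cap T_{j_0}|$ satisfies $K \leq k_0 \leq m_{j_0}-K$. Because $k_0 < m_{j_0}$, the interval $I$ cannot contain all of $T_{j_0}$, so the intersection $I \cap T_{j_0}$ is either a length-$k_0$ prefix, suffix, or interior block of $\bt_{j_0}$. Writing $\sigma$ for the corresponding length-$k_0$ block sum of $\bt_{j_0}$, we may decompose
\[
\sum_{i \in I} a_i \;=\; \sigma + S,
\]
where $S$ is determined by the fixed data $\bp, \bn$ and by the orderings $\{\bt_j\}_{j \neq j_0}$: each other $T_j$ fully contained in $I$ contributes the deterministic quantity $\tau_j$, and at most one other $T_{j'}$ can supply a partial (prefix or suffix) sum depending on the opposite endpoint of $I$.

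First I would condition on $\{\bt_j\}_{j \neq j_0}$ and show that the conditional distribution of $\bt_{j_0}$ still assigns probability $\ll 1/m_{j_0}!$ to each ordering of $T_{j_0}$. For $j_0 \in \{1,u\}$ this follows from Lemma \ref{lemm:permissible}, since $\bt_{j_0}$ is independent of the rest and uniformly distributed over at least $0.98\, m_{j_0}!$ acceptable orderings. For $j_0$ inside a paired block $(\bt_{2k}, \bt_{2k+1})$, this is precisely Lemma \ref{lem:permissible}, applied after conditioning on the partner. Now use dissociation of $T_{j_0}$: for any target $v \in \mathbb{F}_p$ there is at most one $k_0$-subset of $T_{j_0}$ summing to $v$, and hence at most $k_0!\,(m_{j_0}-k_0)!$ orderings of $T_{j_0}$ make $\sigma = v$. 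Summing the per-ordering bound yields
\[
\mathbb{P}\bigl(\sigma = v \,\bigm|\, \{\bt_j\}_{j \neq j_0}\bigr) \;\ll\; \binom{m_{j_0}}{k_0}^{-1}.
\]
Taking $v = x - S$ (which is measurable with respect to the conditioning) and averaging out the conditioning gives the desired inequality for every $x \in \mathbb{F}_p$.

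Finally, the binomial estimate: since $m_{j_0} \asymp R$ and $K = c_2 R^{1/3}$, unimodality of the binomial coefficient on $[K, m_{j_0}-K]$ gives
\[
\binom{m_{j_0}}{k_0} \;\geq\; \binom{m_{j_0}}{K} \;\geq\; (m_{j_0}/K)^K \;=\; e^{\Omega(K \log R)},
\]
which is exactly the claimed tail bound. The main (minor) obstacle is organizational: one has to verify that across all possible placements of the endpoints of $I$ (within $T_{j_0}$, within another $T_{j'}$, within $\overline{\bp}$, or within $\bn$), exactly one random block sum $\sigma$ of $\bt_{j_0}$ survives the conditioning, so that dissociation of $T_{j_0}$ can carry out the anti-concentration argument.
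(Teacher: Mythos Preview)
Your proposal is correct and follows essentially the same approach as the paper: condition on all $\bt_j$ with $j\neq j_0$, use Lemma~\ref{lemm:permissible} (for $j_0\in\{1,u\}$) or Lemma~\ref{lem:permissible} (for a paired block) to retain a $\ll 1/m_{j_0}!$ per-ordering bound, and then invoke dissociation to get the $\binom{m_{j_0}}{k_0}^{-1}$ anti-concentration and the final binomial estimate. Your case split into prefix/suffix/interior blocks and your explicit description of $S$ are a little more detailed than what the paper writes, but the substance is identical.
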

\begin{proof}
    By definition, there exists some $j$ such that $I$ contains exactly $k$ elements of $T_j$, where $K \leqslant k \leqslant |T_j|-K$.  As in the first step of the proof of Lemma \ref{splitandrearrange}, we break the sum over $I$ into the sum over the part intersecting $T_j$ and the part not intersecting $T_j$ and condition on $\bt_i$ for all $i \neq j$.  Lemma \ref{lem:permissible} ensures that even after this conditioning, the probability of $\bt_j$ assuming any particular ordering is $\ll 1/m_j!$.  Since the $k$-element subsets of $T_j$ all have distinct sums, we see that the sum over the part of $I$ intersecting $T_j$ assumes each particular value with probability $$\ll \binom{m_j}{k}^{-1} \leqslant \binom{m_j}{K}^{-1} \leqslant e^{-\Omega(K \log R)},$$
    and the lemma follows.

\end{proof}

Recall that $K=c_2R^{1/3}$ and that $R=R(A)\gg c_1(\log p)^{3/4}$ holds when $|A|\leqslant e^{c(\log p)^{1/4}}$ (see \eqref{Rdefinition}).  Since the number of Type II intervals is trivially at most $|A|^2 \leqslant e^{2c (\log p)^{1/4}}$, Lemma \ref{lemm:typeII} and the union bound imply that the probability of \eqref{eq:Arandomordering} containing a zero-sum Type II interval is at most $$e^{2c(\log p)^{1/4}-\Omega(K \log R)}=o(1),$$ again with plenty of room to spare.  From this and the above observations about the absence of zero-sum Type I intervals, we conclude that \eqref{eq:Arandomordering} is two-sided valid with high probability; in particular, for $p$ sufficiently large (in terms of $c$), there is at least one two-sided valid ordering of $A$.  This proves Theorem \ref{thm:main}.  

One can in fact take $c$ to grow as, e.g.,  $\asymp \log \log p$, but we are not concerned with such lower-order terms since we have not even seriously optimized the exponent $1/4$ in Theorem \ref{thm:main}.

\section{Remarks and open problems}\label{sec:remarks}
We make a couple of remarks about our proof of Theorem \ref{thm:main}.
\begin{itemize}
    \item The union bound in Lemma \ref{lemm:fewcollisions} is one of the main bottlenecks for the value of the exponent $1/4$ in Theorem \ref{thm:main}.  Improving the argument around this lemma would    likely let one take $K$ to be a larger power of $R$, which in turn would let one increase $1/4$ (perhaps to $1/3$) in Theorem \ref{thm:main}.
    \item In proposition \ref{prop:refined-structure}, we can also obtain the extra property that each of $P,N$ is either empty or of size at least $100s$ (say), by splitting each dissociated set into $201$ parts and then absorbing up to $100$ elements of each of $P,N$ if $P,N$ are small.  This property was useful in an earlier version of our proof and may be of interest in the future.   
\end{itemize}

Our paper also leads to several open problems for future inquiry:
\begin{itemize}
    \item The most obvious open problem is improving the bound in Theorem \ref{thm:main}; a natural next goal would be a polynomial threshold (of the form $p^c$).  Even if our methods can be adapted to improve the exponent $1/4$ in Theorem \ref{thm:main}, it seems that neither our probabilistic toolbox nor our dissociated set machinery is suited for sets of polynomial size, so substantial new inputs would be necessary to reach a polynomial threshold.
    \item Our arguments for Theorem \ref{thm:main} show not only that there is some two-sided valid ordering of $A$ but that there are many such orderings.  It would be interesting to estimate the minimum possible number of two-sided valid orderings as a function of $|A|$ (and perhaps also $p$).
    \item The main result of \cite{kravitz} applies not only to the group $\mathbb{F}_p$ but also to all groups of the form $H_1 \times H_2$, where $H_1$ is an abelian group such that every subset of $H_1 \setminus \{0\}$ has a two-sided valid ordering and $H_2$ is an abelian group with no non-zero elements of order strictly smaller than $p$; one example is the group $\mathbb{Z}/2p\mathbb{Z} \cong \mathbb{Z}/2\mathbb{Z} \times \mathbb{Z}/p\mathbb{Z}$.  One could try to extend Theorem \ref{thm:main} to such groups.
    \item In a different direction, one might try to prove Graham's conjecture for very large sets, namely, for sets of size $|A| \geqslant p-f(p)$ for some function $f$ tending to infinity with $p$.  See \cite{HOS} and the references therein for more on Graham's conjecture for very large sets.
    \item Finally, we mention that nonabelian versions of Graham's conjecture, particularly in dihedral groups, have received some attention.  As in the abelian case, work prior to \cite{kravitz} concerned sets of size at most $12$.  Costa and Della Fiore \cite{CDF} then adapted the ideas of \cite{kravitz} to obtain results for sets of nearly logarithmic size in dihedral and dicyclic groups.  It seems more difficult to transfer the proof of Theorem \ref{thm:main} to nonabelian settings, and this could be a fruitful topic for future research.
\end{itemize}

\section*{Acknowledgements}
The first author gratefully acknowledges financial support from the EPSRC. The second author was supported in part by the NSF Graduate Research Fellowship Program under grant
DGE–203965.  We thank Ben Green for drawing our attention to the reference \cite{bourgain}.  We thank an anonymous referee for several helpful comments.

\end{document}